\title{Pairwise optimal coupling of multiple random variables}
\author{Omer Angel
  \and Yinon Spinka}
\date{May 2021}
  \crefname{theorem}{Theorem}{Theorems}
  \crefname{thm}{Theorem}{Theorems}
  \crefname{mainthm}{Theorem}{Theorems}
  \crefname{lemma}{Lemma}{Lemmas}
  \crefname{lem}{Lemma}{Lemmas}
  \crefname{remark}{Remark}{Remarks}
  \crefname{prop}{Proposition}{Propositions}
  \crefname{defn}{Definition}{Definitions}
  \crefname{corollary}{Corollary}{Corollaries}
  \crefname{section}{Section}{Sections}
  \crefname{figure}{Figure}{Figures}
\newtheorem{thm}{Theorem}
\newtheorem{lemma}[thm]{Lemma}
\newtheorem{corollary}[thm]{Corollary}
\newtheorem{prop}[thm]{Proposition}
\theoremstyle{definition}
\newtheorem*{remark*}{Remark}
\newtheorem{question}[thm]{Question}
\renewcommand{\P}{\mathbb P}
\newcommand{\E}{\mathbb E}
\newcommand{\R}{\mathbb R}
\newcommand{\eps}{\varepsilon}
\newcommand{\dTV}{d_{\mathrm TV}}
\newcommand{\cS}{\mathcal S}
\newcommand{\cA}{\mathcal A}
\newcommand{\II}{\rm{I\!I}}
\DeclareMathOperator{\Leb}{Leb}
\DeclareMathOperator{\Exp}{Exp}
\DeclareMathOperator*{\argmin}{argmin}
\begin{document}

\maketitle
\begin{abstract}
  We generalize the optimal coupling theorem to multiple random variables:
  Given a collection of random variables, it is possible to couple all of them so that any two differ with probability comparable to the total-variation distance between them.
  In a number of cases we show that the disagreement probability we achieve is the best possible. The proofs of sharpness rely on new results in extremal combinatorics, which may be of independent interest.
\end{abstract}

\section{Introduction}

A \textbf{coupling} of a collection of random variables $(X_i)_{i\in I}$ is a set of variables $(X'_i)_{i\in I}$ on some common probability space with the given marginals, i.e.\ $X_i$ and $X'_i$ have the same law.
We omit the primes when there is no risk of confusion.
Thus, we think of a coupling as a construction of random variables $(X_i)_{i\in I}$ with prescribed laws.

The \textbf{total variation distance} between two random variables $X$ and $Y$ is defined as
\[
  \dTV(X,Y) = \sup_A \big\{|\P(X\in A) - \P(Y\in A)|\big\},
\]
where the supremum is over all (measurable) sets $A$.
The fundamental, classical theorem relating the total variation distance to coupling is the following.

\begin{thm}
  \label{T:couple2}
  For any two random variables $X$ and $Y$, there exists a coupling such that $\P(X\neq Y) = \dTV(X,Y)$.
  Moreover, for any coupling, $\P(X\neq Y) \ge \dTV(X,Y)$.
\end{thm}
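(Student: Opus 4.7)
The plan is to prove two inequalities: the lower bound $\P(X\neq Y) \geq \dTV(X,Y)$ for any coupling, and the existence of a coupling achieving equality.

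For the lower bound, I would fix an arbitrary coupling and an arbitrary measurable set $A$, then observe that
\[
  \P(X\in A) - \P(Y\in A) = \P(X\in A, Y\notin A) - \P(X\notin A, Y\in A) \leq \P(X\in A, Y\notin A) \leq \P(X\neq Y).
\]
By symmetry (swapping $X$ and $Y$) the same bound holds for $|\P(X\in A)-\P(Y\in A)|$. Taking the supremum over $A$ yields $\dTV(X,Y) \leq \P(X\neq Y)$.

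For the matching upper bound, I would construct an explicit maximal coupling. Let $\mu,\nu$ denote the laws of $X,Y$ and let $\rho$ be any $\sigma$-finite dominating measure (e.g.\ $\mu+\nu$), with densities $f=d\mu/d\rho$ and $g=d\nu/d\rho$. Set $m(x) := \min(f(x),g(x))$ and $\alpha := \int m\,d\rho$. A standard identity for total variation gives $\int (f-g)^+\,d\rho = \int (g-f)^+\,d\rho = \dTV(X,Y)$, so $\alpha = 1-\dTV(X,Y)$. Define sub-probability densities $m$, $p:=f-m=(f-g)^+$, and $q:=g-m=(g-f)^+$, with total masses $\alpha$, $\dTV$, $\dTV$ respectively.

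The coupling is then built by a single auxiliary Bernoulli: with probability $\alpha$, draw $Z$ from the probability measure with density $m/\alpha$ and set $X=Y=Z$; with probability $1-\alpha=\dTV(X,Y)$, independently draw $X$ from $p/\dTV$ and $Y$ from $q/\dTV$. Verification of the marginals is routine: the density of $X$ is $\alpha\cdot(m/\alpha) + \dTV\cdot(p/\dTV) = m + p = f$, and similarly for $Y$. On the first branch $X=Y$, so $\P(X\neq Y) \leq 1-\alpha = \dTV(X,Y)$, matching the lower bound. The only subtlety is the boundary case $\alpha\in\{0,1\}$, where one of the two branches is used with probability zero and is omitted; this is handled trivially. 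There is no real obstacle here — the heart of the argument is recognizing that $\min(f,g)$ captures the maximum possible probability mass on the diagonal $\{X=Y\}$.
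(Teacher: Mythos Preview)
Your proof is correct and is the standard argument for the maximal coupling theorem. Note, however, that the paper does not actually prove this statement: it is stated as a classical, essentially folklore result (attributed to Doeblin via Lindvall), and is used only as background for the paper's main results on coupling more than two variables. So there is no ``paper's own proof'' to compare against; what you have written is precisely the textbook proof one would expect to see if the result were proved in full.

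One minor remark: in the second branch of your construction, $X$ is supported on $\{f>g\}$ and $Y$ on $\{g>f\}$, which are disjoint, so in fact $X\neq Y$ almost surely there and you get $\P(X\neq Y)=\dTV(X,Y)$ exactly, not just $\le$. You already recover equality from the lower bound anyway, so this is only a cosmetic point.
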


As remarked, technically the coupling is a construction of random variables $X'$ and $Y'$ on some probability space with measure $\P'$ so that $X$ and $X'$ have the same law, and similarly $Y$ and $Y'$.
However, following common practice in probability theory, we do not stress the distinction between $X$ and $X'$.
Thus, we use $\P$ for the new probability measure and $X$ and $Y$ for the new variables.
This is a slight abuse of notation which should not cause any difficulty.

\cref{T:couple2} is very simple, and could even be called folklore.
See e.g. \cite{TE} for another recent application of this coupling (under the name Poisson functional representation).
According to Lindvall's overview of Doeblin's life and work \cite{Lind1}, couplings and the inequality in \cref{T:couple2} originated in Doeblin's work in the 30's.
Since that time, coupling has become an important tool in probability theory with numerous applications.
We refer the reader to \cite{fdH,Lind,thorisson2000coupling} for a partial review of applications of couplings.

\medskip

The starting point for the present work is the following observation, which while basic, is not well known:
When coupling more than two random variables, the total variation bound cannot in general be achieved simultaneously for all pairs.
(While the term \emph{coupling} hints at having two random variables, it is standard practice to use it also for larger collections.)	
For example, let $X\in\{0,1\}$, $Y\in\{0,2\}$ and $Z\in\{1,2\}$ each be uniform on the two possible values.
Then $\dTV(X,Y)=\dTV(X,Z)=\dTV(Y,Z)=\frac 12$.
However, in any coupling of $\{X,Y,Z\}$, at least two of the three pairs are unequal.
Thus,
\[ \P(X\neq Y) + \P(X\neq Z) + \P(Y\neq Z) \geq 2 ,\]
and the disagreement probabilities are not all equal to $\frac 12$.

The following result is a generalization of \cref{T:couple2} with a slightly higher probability of disagreement.
The main objective of this paper is to describe and study two constructions that imply this theorem, and to investigate its optimality.
Indeed, in certain cases we show that the given bound is best possible.

\begin{thm}\label{T:main}
  Let $\cS$ be any collection of random variables,
  all absolutely continuous w.r.t.\ a common $\sigma$-finite measure.
  Then there exists a coupling of the variables in $\cS$ such that, for any $X,Y \in \cS$,
  \[
    \P(X \neq Y) \leq \frac{2\dTV(X,Y)}{1+\dTV(X,Y)} .
  \]
\end{thm}

Let us highlight three special cases of this result.
If the reference measure, $\mu$, is the Lebesgue measure on $\R$, then \cref{T:main} yields a coupling of all continuous real random variables.
A second case is when $\mu$ is the counting measure on some countable set $\Omega$, then we get a coupling of all variables taking values in $\Omega$.
Finally, if $\cS$ is a countable collection of random variables, it is always possible to find a measure $\mu$ such that all are continuous w.r.t.\ $\mu$ (indeed, take any non-trivial mixture of their laws).

Somewhat curiously, there are two fairly different constructions of couplings, both of which realize the bound in the theorem, which we describe in \cref{sec:couplings}.
One construction is more naturally adapted to continuous random variables and the other to discrete, though either can be used to prove \cref{T:main}.
While both constructions achieve lower disagreement probabilities in some cases, the worst-case disagreement probability is the same in both.
The two constructions are described in \cref{sec:couplings} and \cref{T:main} is deduced from their analysis.

\medskip

Some forms of this theorem have appeared in the past, and the constructions we describe below can also be viewed as generalizations of previously used methods.
We have not found in the literature any detailed proof of this result.
Since the proof (by either of our constructions) is very short, we include it below.
The best reference we are aware of is by Barak et al. \cite[Lemma 4.1]{Barak}, which reads almost identical to \cref{T:main}, except that the inequality is replaced by equality, and that the family of random variables is (implicitly) assumed to be finite.
(\cite{Barak} is an extended abstract without a detailed proof, and we were unable to locate the full version of that paper.)
The basic idea used there is attributed to Broder \cite{broder}.
Broder was interested in algorithmically measuring similarity between documents, and used the observation that if elements of $A\cup B$ are ordered by a uniform permutation $\pi$, and $h_\pi(S)$ is the $\pi$-minimal element of $S$ then $\P(h_\pi(A)=h_\pi(B)) = \frac{|A\cap B|}{|A\cup B|}$.
For the random variables $X,Y$ that are respectively uniform on finite sets $A$ and $B$, and in the special case that $|A|=|B|$, this equals $1-\frac{2\dTV(X,Y)}{1+\dTV(X,Y)}$.
This can be seen as a special case of Coupling \II\ below.

A different approach was used by Kleinberg and Tardos \cite{KT} for rounding fractional solutions of linear programming problems to integer solutions.
Their approach is to apply von-Neumann's rejection sampling in a discrete setting.
Lemma 3.2 of \cite{KT} gives \cref{T:main} in the case of variables taking values in a common finite set, with the slightly worse bound $2\dTV$ instead of $\frac{2\dTV}{1+\dTV}$.
In that case, the Kleinberg--Tardos approach can be seen as a special case of Coupling I below.
Charikar \cite{Charikar} has connected these two approaches, showing how the Kleinberg--Tardos rounding algorithm can be seen as a generalization of Broder's idea, and that it can be used for approximating several similarity measures of distributions.
We remark that while Coupling \II\ is also a generalization of Broder's minimal element procedure to non-uniform distributions, it is genuinely different from the Kleinberg--Tardos one.
The difference is demonstrated by \cref{fig:triangles}.

\medskip


The fact that the bound of \cref{T:main} comes up in different constructions raises the possibility that it is optimal.
For a function $f \colon [0,1]\to[0,1]$, let us say that $f$ is a \textbf{disagreement bound} if for any finite collection of random variables there is a coupling of the variables so that any two of them, say $X$ and~$Y$, satisfy
\[ \P(X\neq Y) \leq f(\dTV(X,Y)) .
\]
Note that by taking limits it follows that the same bound on disagreement probabilities can be achieved for countable families of random variables.
Then \cref{T:main} states that
\[ F(x) := \frac{2x}{1+x} \]
is a disagreement bound.
It is natural to ask whether there are any smaller disagreement bounds.
The trivial lower bound (see \cref{T:couple2}) is that any disagreement bound must have $f(x)\geq x$ for all $x$.
The example presented before \cref{T:main}, of three variables each taking two possible values, shows that any disagreement bound must have $f(\frac 12)\geq \frac 23 = F(\frac 12)$.
More generally, we show that any disagreement bound must have $f(x) \ge F(x)$ for $x=\frac1n$ and for $x=1-\frac1n$ for all positive integers $n$, as well as for some other rational numbers (see \cref{prop:1/n-bound,prop:k/n-bound,prop:near1}).
We do not know whether such a pointwise bound holds at every point $x \in (0,1)$.
Nevertheless, we provide a lower bound at any point $x$, which improves on the trivial lower bound $f(x)\geq x$, and is asymptotic to $F(x)$ as $x \to 0$ (see \cref{prop:all-x-bound}).
Some of these bounds are depicted in \cref{fig:bounds}.
Moreover, we show that $F$ is optimal, in the sense that no disagreement bound can simultaneously improve on $F$ everywhere, or even on an open interval:

\begin{thm}\label{T:optimal-bound}
  If a disagreement bound is pointwise smaller-or-equal than $F$, then it coincides with~$F$. Moreover, if a disagreement bound is pointwise smaller-or-equal than $F$ on some open interval, then it coincides with~$F$ on that interval.
\end{thm}

\begin{corollary}[\cite{optimal}]\label{cor:optimal}
  Every non-decreasing disagreement bound is pointwise larger-or-equal than $F$.
\end{corollary}

Optimality of the bound $2x/(1+x)$ arising in the Broder and Kleinberg--Tardos constructions has been investigated in \cite{optimal}.
Their model is somewhat different from the coupling one.
There, Alice and Bob are required to sample from two distributions, each known only to one of them, using access to shared randomness but with no communication.
Their goal is to maximize the probability of selecting the same value.
It is not assumed that Alice and Bob use the same strategy.
However, if one requires the strategies to be identical --- as is the case in prior constructions --- then the strategy naturally provides a coupling of more than two distributions.
The definition in \cite{optimal} differs from ours in another key aspect: there it is required (in our notations) that if $\dTV(X,Y)\leq x$ then $\P(X\neq Y)\leq f(x)$.
This is logically equivalent to restricting attention to \emph{non-decreasing} disagreement bounds,
and in that context they prove \cref{cor:optimal}.
Whether or not the monotonicity assumption of \cref{cor:optimal} can be removed remains an open problem.

Optimality of $F$ as a disagreement bound, in both the local and global senses is discussed in \cref{sec:optimality}.

\paragraph{Relation to multi-marginal optimal transport.}

Optimal transport gives rise to a theory analogous to couplings, with many parallels.
For example, Kantorovich's duality theorem is the equivalent to \cref{T:couple2}.
The question of optimal couplings of multiple random variables is closely related to the problem of multi-marginal optimal transport.
The terminology used in that context is different from the probabilistic terminology that we use.
In multi-marginal optimal transport, one is given a \emph{cost function} $\phi \colon \R^d\times\R^d \to [0,\infty)$ and probability measures $\mu_1,\dots,\mu_n$ on $\R^d$. 
Most commonly, one studies convex cost functions such as $\phi(x,y)=\|x-y\|_p^q$. 
For total variation distances, the relevant cost function is $\phi(x,y) = \mathbf{1}_{x\neq y}$.
(Even more generally, there would be a cost function on $n$-tuples $\phi \colon (\R^d)^n\to[0,\infty)$, though the case of a pairwise cost is already of interest.)

A \emph{plan} is a probability measure $\mu$ on $(\R^d)^n$ whose projections are the given $\mu_1,\dots,\mu_n$.
If $\mu_i$ is taken to be the law of a random variable $X_i$, then a plan is nothing other than a coupling of the random variables.
The objective is to determine the infimum $\inf_\mu \sum_{i,j} \int \phi(x_i,x_j) d\mu$, and find optimal $\mu$.
This value is clearly at least $\sum_{i,j} \inf_\mu \int \phi(x_i,x_j) d\mu$.
In probabilistic terms, we let $d_\phi(X_i,X_j) := \inf \E_\mu \phi(X_i,X_j)$, where the infimum is over all couplings, so that the statement is
\[
\inf_\mu \sum_{i,j} \E_\mu \phi(X_i,X_j) \geq 
\sum_{i,j} d_\phi(X_i,X_j).
\]
It is natural to ask how far apart the two quantities above can be.
It is a simple observation that 
\begin{equation}
  \label{eq:MMOT}
  \inf_\mu \sum_{i,j} \E_\mu \phi(X_i,X_j) \leq 
  2c \sum_{i,j} d_\phi(X_i,X_j),
\end{equation}
where $c$ is any constant such that $\phi(x,z) \le c(\phi(x,y)+\phi(y,z))$ for all $x,y,z \in \R^d$. 
Indeed, if one uniformly picks $k\in\{1,\dots,n\}$ and uses the optimal pairwise coupling of each $X_i$ with $X_k$, one gets the bound \eqref{eq:MMOT}.
The main difference between the multi-marginal optimal transport problem and the one we consider is that we aim to get a good upper bound on $\E_\mu \phi(X_i,X_j)$ for every $i$ and $j$, and not merely on the sum.
We refer the reader to \cite{Pass} for an introduction to multi-marginal optimal transport.

\section{Coupling constructions}
\label{sec:couplings}

In this section, we prove \cref{T:main}.
We give two different constructions of couplings, each of which leads to a proof of \cref{T:main}.
We write $a \wedge b$ and $a \vee b$ for the minimum and maximum of $a$ and $b$, respectively.

\subsection{Coupling I}

Our first construction of a coupling is especially suited for continuous  random variables, i.e., which have a density function.
We say that a random variable $X$ is continuous with respect to a measure $\mu$ if there is a density function $g$ such that $\P(X\in A)=\int_A g \,d\mu$.
Note that we do not require $\mu$ to be the Lebesgue measure.
Thus, if $\mu$ is the counting measure on a countable set, then $X$ is continuous with respect to $\mu$ if it is discrete and supported in that set.

\begin{prop}\label{P:couplingI}
  Let $(\Omega,\mu)$ be a $\sigma$-finite measure space.
  For any collection $\cS$ of random variables, all continuous with respect to $\mu$, there exists a coupling such that, for any $X,Y \in \cS$ with densities $g,h$,
  \begin{equation}
    \label{eq:coup1_pr}
    \P(X \neq Y) = F(\dTV(X,Y)) - \frac{1}{1+\dTV(X,Y)} \int_{\Omega} (g \wedge h) \cdot |\P(X=x)-\P(Y=x)| \, d\mu(x) .
  \end{equation}
  In particular, if $\mu$ has no atoms, then
  $\P(X \neq Y) = F(\dTV(X,Y))$.
\end{prop}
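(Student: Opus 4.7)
The plan is to realize the coupling via a ``minimum-mark'' rule on a three-dimensional Poisson process. Let $\Pi$ be a Poisson point process on $\R^d\times[0,\infty)\times[0,\infty)$ with intensity $\mu\otimes\Leb\otimes\Leb$, and write its coordinates as $(P,T,V)$. For each density $g\in\cS$, define $X_g$ to be the $P$-coordinate of the point $(P,T,V)\in\Pi$ that minimizes $V$ subject to $T\le g(P)$; equivalently, the point of $\Pi$ with smallest $V$-mark lying in the hypograph $H_g\times[0,\infty):=\{(P,T,V):T\le g(P)\}$. Since the restriction of the intensity to $H_g\times[0,\infty)$ factors as $(\mu\otimes\Leb)|_{H_g}\otimes\Leb_V$ with base mass $\int g\,d\mu=1$, the $V$-coordinates of $\Pi\cap(H_g\times[0,\infty))$ form a rate-$1$ Poisson process on $[0,\infty)$; the minimum is therefore almost surely finite, and a standard Mecke/thinning argument gives that the $(P,T)$-coordinates of the argmin are distributed uniformly on $H_g$ with respect to $\mu\otimes\Leb$. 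The $P$-marginal is then $\int_0^{g(P)}1\,dT=g(P)$ times $\mu$, as required.

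Now fix two elements of $\cS$ with densities $g,h$ and set $\alpha=\dTV(X,Y)$, so that $(\mu\otimes\Leb)(H_g\cap H_h)=1-\alpha$ and $(\mu\otimes\Leb)(H_g\cup H_h)=1+\alpha$. Applying the same uniform-location argument to the enlarged region $H_g\cup H_h$: the $V$-argmin over $(H_g\cup H_h)\times[0,\infty)$ has its $(P,T)$ uniform on $H_g\cup H_h$, and whenever this $(P,T)$ lies in $H_g\cap H_h$ the point is simultaneously the $V$-argmin in both $H_g$ and $H_h$, forcing $X_g=X_h$. Thus this ``shared minimizer'' event contributes probability exactly $(1-\alpha)/(1+\alpha)$ to $\P(X=Y)$. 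When $\mu$ is atom-free, distinct points of $\Pi$ almost surely have distinct $P$-coordinates (a direct second-moment/Mecke computation), so the shared-minimizer event is the only route to $X=Y$, which already yields $\P(X\ne Y)=F(\alpha)$ and proves the ``In particular'' clause.

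The main technical step is the atom correction. At an atom $x_0$ with mass $m=\mu(\{x_0\})$, and assuming without loss of generality that $g(x_0)>h(x_0)$ (the opposite case is symmetric), the fiber $\{x_0\}\times[0,\infty)\times[0,\infty)$ carries an independent 2D Poisson process of rate $m$; distinct points of this fiber share the $P$-coordinate $x_0$ and so can produce an additional equality event beyond the shared-minimizer one. Partition $H_g\cup H_h$ into the two atom-fiber slabs $A:=\{x_0\}\times[0,h(x_0)]\times[0,\infty)$ and $B:=\{x_0\}\times(h(x_0),g(x_0)]\times[0,\infty)$, together with the three off-atom pieces corresponding to $H_g\cap H_h$, $H_g\setminus H_h$, and $H_h\setminus H_g$; the five regions are disjoint and each carries an independent $V$-Poisson clock whose rate equals its $\mu\otimes\Leb$-mass. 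The event ``argmin-in-$H_g$ lies in $B$ and argmin-in-$H_h$ lies in $A$'' amounts to the clock of $B$ being the overall minimum among $\{A,B,C,D,E\}$ (mass $1+\alpha$) and the clock of $A$ being the minimum among the three $H_h$-clocks $\{A,C,E\}$ (mass $1$); by the standard formula for the ordering of independent exponentials this probability equals $\frac{m(g-h)(x_0)}{1+\alpha}\cdot\frac{mh(x_0)}{1}=\frac{m^2(g\wedge h)(x_0)\,|g(x_0)-h(x_0)|}{1+\alpha}$. Summing over atoms and using the identity $|g(x)-h(x)|\,\mu(\{x\})=|\P(X{=}x)-\P(Y{=}x)|$ produces the correction $\frac{1}{1+\alpha}\int(g\wedge h)|\P(X{=}x)-\P(Y{=}x)|\,d\mu$; subtracting $\P(X=Y)$ from $1$ then yields \eqref{eq:coup1_pr}. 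The principal obstacle is precisely this atom bookkeeping---keeping the five-region decomposition straight, and checking that both orientations $g(x_0)\gtrless h(x_0)$ give the symmetric contribution in terms of $(g\wedge h)(x_0)$ and $|g-h|(x_0)$.
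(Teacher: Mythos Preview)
Your proposal is correct and follows essentially the same approach as the paper: the identical Poisson construction on $\R^d\times\R_+\times\R_+$, the same ``shared minimizer'' argument giving $\P(X=Y)\ge(1-\alpha)/(1+\alpha)$, and the same atom correction. The only cosmetic difference is in how the atom term is computed: the paper conditions on which of the three regions $H_g\cap H_h$, $H_g\setminus H_h$, $H_h\setminus H_g$ captures the first point and then uses that, given the first point lands in $H_g\setminus H_h$, $X$ and $Y$ are conditionally independent with explicit laws; you instead split the union into five pieces (isolating the atom fiber) and run an exponential-race calculation directly. Both yield the same integrand, and your verification that ``$B$ is the overall minimum and $A$ beats $\{C,E\}$'' is exactly the right event is sound.
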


\cref{T:main} is a direct consequence of \cref{P:couplingI}.
The coupling is based on a folklore construction of a random variable in terms of a Poisson point process, which is a form of von-Neumann rejection sampling.
As noted above, in the case of distributions on a finite set, Coupling I simplifies to the Kleinberg--Tardos rounding scheme.

\begin{proof}
  Let $\cS=\{X_i\}_i$ and let the density of $X_i$ be $f_i$. 
  We begin with a Poisson point process on $\Omega\times\R_+^2$.
  Specifically, let $\cA$ be a Poisson point process with intensity $\mu\times\Leb\times\Leb$ on $\Omega\times\R_+\times\R_+$, where $\Leb$ is the Lebesgue measure on $\R_+$.
  We denote the points of $\cA$ as $(x,s,t)$, and think of the third coordinate as a time coordinate.
  Given the set $\cA$, define $\cA_i := \{(x,s,t) \in \cA : s\leq f_i(x)\}$.
  We define the random variables by $X_i=x$ if $(x,s,t)\in\cA_i$ has the minimal $t$ among all points of $\cA_i$.  
  If $\cA_i$ does not have a unique point with minimal $t$, we assign $X_i$ an arbitrary value. 
  This happens if $\cA_i$ is empty, or has multiple points with equal minimal $t$, or has no point with $t$-coordinate equal to the infimum of all $t$-coordinates. 
  All of these have probability $0$, so the value of $X_i$ on these events does not affect its law or the disagreement probabilities.

  To see that $X_i$ has the required law (so that the above is indeed a coupling), think of points $(x,s)$ appearing at rate $1$ in time, and intensity $\mu\times\Leb$ on the half plane.
  Points with $s > f_i(x)$ are ignored. 
  Points with $s\leq f_i(x)$ appear at total rate $1$, so there is almost surely a first such point.
  The probability that the $x$-coordinate of the first such point is in some set $A$ is $\int_A f_i(x) dx = \P(X_i\in A)$, as required.

  \medskip

\begin{figure}
 \centering
 \includegraphics[scale=0.7,trim={2cm 0cm 1cm 0cm},clip]{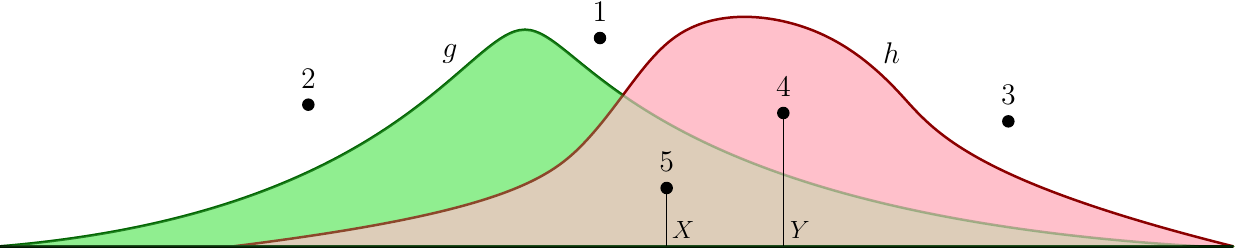}
 \caption{Illustration of Coupling \rm{I}.
   The densities $g$ and $h$ of $X$ and $Y$ are depicted.
   Points arrive according to a Poisson point process.
   Five such points are depicted, with the numbers indicating their relative order of arrival.
   The first point to fall under the graph of the $g$ (resp.\ $h$) determines the value of $X$ (resp.\ $Y$).
   In the depicted situation, point 5 determines $X$ and point 4 determines $Y$ so that $X$ and $Y$ are unequal.
   In general, $X$ and $Y$ are equal whenever the first point to fall under the union of the graphs of $g$ and $h$ falls in their intersection. This occurs with
probability $1-F(\dTV(X,Y))$, and when $\mu$ has no atoms, it is the only way for $X$ and $Y$ to be equal.}
 \label{fig:coupling1}
\end{figure}

  Let $X$ and $Y$ be two of the variables with densities $g$ and $h$, respectively, and let $\alpha := \dTV(X,Y)$.
  To see that the disagreement probability is at most $F(\alpha)$, consider the first point $(x,s)$ to appear that has $s\leq g(x)\vee h(x)$.
  If it happens that $s\leq g(x)\wedge h(x)$, then we get $X=Y=x$.  
  Otherwise, this point determines the value of either $X$ or $Y$, and some later point determines the value of the other.
Consequently, for any measurable set $A$,
  \begin{equation}\label{eq:1}
  \P(X=Y \in A\text{ and the same point determines both $X$ and $Y$}) = \frac{\int_A (g \wedge h) d\mu}{\int_{\Omega} (g \vee h) d\mu} .
  \end{equation}
  Since $\int_{\Omega} (g\wedge h) d\mu = 1-\alpha$ and $\int_{\Omega} (g\vee h) d\mu = 1+\alpha$, we deduce that
  \[ \P(X=Y) \geq \frac{1-\alpha}{1+\alpha} = 1-F(\alpha) .\]

  Let $(x,s,t)$ be a point that determines one of $X$ or $Y$, but not the other.
  For continuous random variables, or more generally when $\mu$ has no atoms, the probability that the point $(x',s',t')$ that determines the other has $x=x'$ is zero, so that $\P(X\neq Y) = F(\alpha)$.
  When $\mu$ has atoms, this event may have a non-zero probability.
  The event that $X=Y=x$, and $X$ is determined by a point $(x,s,t)$ and $Y$ determined by a later point $(x,s',t')$ (i.e. $t'>t$) happens if and only if $h(x) < s \le g(x)$ and $s' \le h(x)$, and no earlier points determine $X$ or $Y$.
  The probability that the first point to determine $X$ or $Y$ determines $X$ but not $Y$ is $\frac{\alpha}{1+\alpha}$.
  Conditioned on this, $X$ and $Y$ are independent, with $X$ having density 
  $\frac {g-h}\alpha \mathbf{1}_{g>h} d\mu$, and with the law of $Y$ being unchanged.
  Thus,
  \[ \P(X=Y \in A\text{ and $X$ is determined before $Y$}) = \frac{1}{1+\alpha} \int_A (g-h) \mathbf{1}_{g>h} \cdot \P(Y=x) \, d\mu(x) .\]
  A similar formula holds when $g<h$ with $Y$ determined first.
  Combining the two, we get that the probability that $X=Y \in A$ but they are determined by distinct points is
  \[ \frac{1}{1+\alpha} \int_A |g-h| \cdot (\P(X=x)\wedge \P(Y=x)) \, d\mu(x). \]
  Rewriting the above integrand and using~\eqref{eq:1}, we obtain that
  \begin{equation}\label{eq:1b}
  \P(X=Y \in A) = \frac{1}{1+\alpha} \int_A (g \wedge h) \cdot (1+ |\P(X=x)-\P(Y=x)|) \, d\mu(x) ,
  \end{equation}
  from which the proposition follows.
\end{proof}

\subsection{Coupling \II}


We give now a second construction of a coupling of random variables.
We describe this construction for discrete random variables.
It is closely related to the so-called \emph{Poisson functional representation} which holds also for continuous random variables;
see the discussion after the proof.
We focus our discussion on the discrete case for several reasons: the discrete analysis is slightly simpler, \cref{T:main} was already proved in full generality using \cref{P:couplingI}, and \cref{T:main} can also be deduced from the discrete case by an approximation procedure.

\begin{prop}\label{P:couplingII}
 For any collection $\cS$ of random variables taking values in a common countable set, there exists a coupling such that, for any $X,Y \in \cS$,
\begin{equation}\label{eq:couplingII}
\P(X=Y) = \sum_u \left(\sum_v \frac{\P(X=v)}{\P(X=u)} \vee \frac{\P(Y=v)}{\P(Y=u)}\right)^{-1} .
\end{equation}
Moreover, this expression is at least $1-F(\dTV(X,Y))$.
\end{prop}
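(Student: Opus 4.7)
The plan is to couple the family $\cS$ by fixing independent rate-$1$ exponentials $(T_u)$ indexed by the common countable state space and setting $X := \argmin_u T_u/\P(X = u)$ for each $X \in \cS$, with an arbitrary tie-breaking rule on the null event where the argmin is not unique (points with zero mass are formally assigned $+\infty$ and so never win). Since $T_u/\P(X=u) \sim \Exp(\P(X=u))$ are independent across $u$ for fixed $X$, the standard argmin-of-independent-exponentials calculation gives $X$ the correct marginal law.

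To derive the joint formula, I would compute $\P(X=Y=u)$ for two members $X, Y \in \cS$ with masses $g, h$. The event $\{X = Y = u\}$ coincides with $\{T_v > T_u\cdot [g(v)/g(u) \vee h(v)/h(u)]$ for all $v \neq u\}$. Conditioning on $T_u = t$ and using independence of $(T_v)_{v \neq u}$, the surviving tail probabilities multiply to $\exp(-t \sum_{v \neq u}[g(v)/g(u)\vee h(v)/h(u)])$; integrating against $e^{-t}\,dt$ yields $\P(X=Y=u) = 1/\sum_v[g(v)/g(u) \vee h(v)/h(u)]$ (the $v = u$ term equaling $1$), and summing over $u$ gives \eqref{eq:couplingII}.

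For the lower bound $\P(X=Y) \geq 1 - F(\alpha)$ with $\alpha = \dTV(X,Y)$, the plan is to clear denominators in \eqref{eq:couplingII}, writing $\P(X=Y) = \sum_u g(u)h(u)/S_u$ with $S_u := \sum_v [g(v)h(u) \vee h(v)g(u)]$. The identity $a \vee b = a + b - a \wedge b$ together with $\sum g = \sum h = 1$ gives $S_u = g(u) + h(u) - T_u$, where $T_u := \sum_v [g(v)h(u) \wedge h(v)g(u)]$. The key estimate is the pointwise bound $g(v)h(u) \wedge h(v)g(u) \geq (g \wedge h)(v)(g \wedge h)(u)$, which holds because $(g \wedge h)(v)(g \wedge h)(u)$ is dominated by each of the two products on the left. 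Summing over $v$ yields $T_u \geq (1-\alpha)(g \wedge h)(u)$, hence $S_u \leq (g \vee h)(u) + \alpha(g \wedge h)(u) \leq (1+\alpha)(g \vee h)(u)$. Substituting back and using $g(u)h(u) = (g \wedge h)(u)(g \vee h)(u)$ gives $\P(X=Y) \geq (1+\alpha)^{-1}\sum_u (g \wedge h)(u) = (1-\alpha)/(1+\alpha) = 1 - F(\alpha)$.

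The main technical nuisance I anticipate is the bookkeeping for state points with $g(u) = 0$ or $h(u) = 0$: all of \eqref{eq:couplingII}, the argmin construction, and the chain of inequalities above must be interpreted so that such points contribute $0$ to $\P(X=Y)$ (equivalently $S_u = +\infty$), which is consistent with the probabilistic picture that a point of zero mass is never selected by the argmin. Apart from this, the derivation reduces to two applications of the trivial inequality $x \wedge y \leq x,y$, so I expect the remaining write-up to be routine once \eqref{eq:couplingII} is in hand.
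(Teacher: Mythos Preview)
Your proposal is correct and follows essentially the same route as the paper: the same exponential-argmin construction, the same computation of $\P(X=Y=u)$, and the same termwise bounding of the inner sum to reach $(1-\alpha)/(1+\alpha)$. The only cosmetic difference is in the final step, where the paper replaces your chain of estimates on $S_u$ by the single inequality $\tfrac{p_v}{p_u}\vee\tfrac{q_v}{q_u}\le\tfrac{p_v\vee q_v}{p_u\wedge q_u}$, which immediately gives $\big(\sum_v \tfrac{p_v}{p_u}\vee\tfrac{q_v}{q_u}\big)^{-1}\ge (p_u\wedge q_u)/\sum_v(p_v\vee q_v)$ and hence~\eqref{eq:3}.
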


We emphasize that we do not assume that $\cS$ is a countable collection, but rather only that all random variables in $\cS$ are supported in a fixed countable set.
Indeed, our construction gives a coupling of \emph{all} random variables  supported in the given set.

\begin{proof}
  Suppose that the random variables take values in a countable set $U$.
  Let $\{E_u\}_{u\in U}$ be independent $\Exp(1)$ random variables.
  Fix a random variable $X\in\cS$ and denote $p_u := \P(X=u)$.
  Now define
  \[
    X := \argmin_{u \in U} \left\{\frac{E_u}{p_u} \right\},
  \]
  i.e., $X=u$ if $u$ is the minimizer of $\frac{E_u}{p_u}$.
  If there are multiple values of $u$ achieving the minimum, or if there is no minimizer, we pick a value for $X$ arbitrarily.
  Both of these are null events for any fixed $X \in \cS$. When the collection $\cS$ is uncountable, it may happen that there is always some variable in $\cS$ for which one of these events occurs, but this does not cause any problems.
  Standard properties of exponential variables imply that for any distribution $\{p_u\}$, the event that $\frac{E_u}{p_u}$ is smaller than $\frac{E_v}{p_v}$ for every $v\neq u$ has probability $p_u$.
  Thus, the variable $X$ constructed above has the required distribution, and therefore this defines a coupling of all the random variables in~$\cS$.

  We now show that this coupling satisfies~\eqref{eq:couplingII}.
  To this end, fix $X,Y\in\cS$ and denote $p_u := \P(X=u)$ and $q_u:=\P(Y=u)$ for $u \in U$.  
  Let us find an expression for $\P(X=Y=u)$ for a fixed $u \in U$.
  By the definition of the coupling, $\{X=Y=u\}$ is almost surely the event that $\frac{E_u}{p_u} \le \frac{E_v}{p_v}$ and $\frac{E_u}{q_u} \le \frac{E_v}{q_v}$ for every $v \in U$. Thus,
  \[ \P(X=Y=u) = \P\left( \frac{E_v}{E_u} \ge \frac{p_v}{p_u} \vee \frac{q_v}{q_u} \text{ for all }v \in U \right) .\]
  This is the probability that an exponential random variable with intensity 1 is the smallest among a family of independent exponential random variables with parameters $(\lambda_v)_v$, where $\lambda_v := \frac{p_v}{p_u} \vee \frac{q_v}{q_u}$ (note that $\lambda_u=1$). It then follows from standard properties of exponential random variables that this probability is $(\sum_v \lambda_v)^{-1}$. Hence,
  \begin{equation}\label{eq:2}
  \P(X=Y=u) = \left(\sum_v \frac{p_v}{p_u} \vee \frac{q_v}{q_u}\right)^{-1} .
  \end{equation}
  Summing over $u \in U$ yields~\eqref{eq:couplingII}.

  It remains to show that the right-hand side of~\eqref{eq:couplingII} is at least $1-F(\dTV(X,Y))$. To see this, we first observe that $1-F(x)=\frac{1-x}{1+x}$ and that
  \[
    \sum_u p_u \wedge q_u =  1-\dTV(X,Y)
    \qquad \text{and} \qquad
    \sum_u p_u \vee q_u =  1+\dTV(X,Y).
  \]
  Thus, it suffices to show that
  \begin{equation}\label{eq:3}
    \sum_u \left(\sum_v \frac{p_v}{p_u} \vee \frac{q_v}{q_u}\right)^{-1} \ge \frac{\sum_u p_u \wedge q_u}{\sum_{v} p_v \vee q_v} .
  \end{equation}
  This follows immediately from the inequality $\frac ab \vee \frac cd \le \frac{a \vee c}{b \wedge d}$.
\end{proof}

\paragraph{The Poisson functional representation.}
As noted above, this coupling is closely related to the \emph{Poisson functional representation} of Li and El Gamal \cite{TE,TA}, which was brought to our attention after online publication.
It is used there in the analysis of certain communication channels.
Let $X$ be a random variable with law $g d\mu$ for some $\sigma$-finite measure space $(\Omega,\mu)$.
Consider a Poisson point process $\cA$ with intensity $\mu\times\Leb$ on $\Omega\times\R_+$.
We denote the points of $\cA$ as $(x,s)$ with no third coordinate as in Coupling I.
We define $X=x_0$ if $(x_0,s_0)\in\cA$ minimizes $s/g(x)$ over the points of $\cA$.
Using the same Poisson process for a collection $\cS$ of random variables, all continuous w.r.t.\ $\mu$, yields a coupling of the variables.
See \cref{fig:coupling2}.

\begin{figure}
  \centering
  \includegraphics[scale=0.7,trim={2cm 0cm 1cm 0cm},clip]{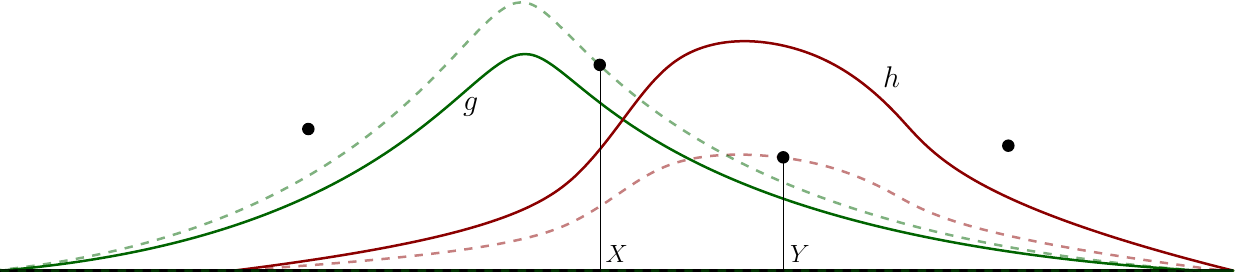}
  \caption{Illustration of Coupling \II.
    The densities $g$ and $h$ of $X$ and $Y$ are depicted.
    Points are a Poisson point process in the half-plane, but unlike in Coupling \rm{I}, there are no times associated with them.
    Also shown is the smallest multiple of $g$ (resp.\ $h$) that meet a point of the process.
    This intersection point determines the value of $X$ (resp.\ $Y$).
    In the depicted situation, different points determine $X$ and $Y$, and the variables are unequal.
  }
  \label{fig:coupling2}
\end{figure}


When $\mu$ is the counting measure on a countable set $U$, only the point $(x,s)$ with minima $s$ for each $x\in U$ is ever used in the coupling.
Since the $s$-coordinates of these points are i.i.d.\ $\Exp(1)$ random variables, we recover Coupling I in the discrete case.

One can deduce the disagreement bound for random variables constructed using this coupling (and hence \cref{T:main}) along the same lines as used in the proof of \cref{P:couplingII} above.
The main difference is that an additional step is required, to express $\P(X=Y)$ in terms of the Poisson process.
The analogue of \eqref{eq:couplingII} for variables $X$ and $Y$ with laws $gd\mu$ and $hd\mu$ is
\begin{equation}
  \label{eq:couplingIIc}
  \P(X=Y) = \int \left(\int \frac{g(v)}{g(u)} \vee \frac{h(v)}{h(u)} d\mu(v) \right)^{-1} d\mu(u).
\end{equation}
A lower bound for this probability can also be deduced from \cite[Lemma 1]{TA}.

\subsection{Comparison of the couplings}

The two coupling share various features beyond the fact that they both achieve the disagreement bound~$F$, but (except in degenerate cases) they are not the same coupling.

\paragraph{Geometric description of the couplings.}
While Coupling I is very intuitive and the fact that it achieves the disagreement bound $F$ is more transparent, there are good reasons to consider Coupling \II\ as well.
This is made clear by considering the case of random variables with common finite support.
Consider the collection $\cS$ of all random variables taking values in $\{1,\dots,n\}$.
The set $\cS$ is naturally described by the $(n-1)$-dimensional simplex $\Delta_n := \{ (a_1,\dots,a_n) \in [0,1]^n : \sum_i a_i =1 \}$ so that we may identify each random variable $X \in \cS$ with a point in $\Delta_n$.
A point in the simplex is a convex combination of the corners, and the coefficients (also referred to as barycentric coordinates) are the probabilities of the different values.
A coupling of the random variables in $\cS$ may be described as a random partition $A_1,\dots,A_n$ of the simplex so that $X = i$ for those variables $X\in A_i$. 
The validity of the coupling says that a point $X=(a_1,\dots,a_n)$ has $\P(X\in A_i) = a_i$ for all $i$.
The disagreement bounds are a control on the probability that nearby points (in the total-variation metric) are not in the same set of the partition.

Let us describe the two couplings using this terminology. 
Coupling I in this case is nothing but the Kleinberg--Tardos construction:
Each value $i\in\{1,\dots,n\}$ has a Poisson point process in $\R_+\times\R_+$, with points $(i,s,t)$. 
The value assigned to the random variable $X$ with coordinates $(a_1,\dots,a_n)$ is the $i$ associated with the point of minimal $t$ such that $s\leq a_i$.
We may clearly ignore points with $s>1$.
We can then think of the remaining points as arriving at random times, each with a random uniform $i$ and uniform $s\in[0,1]$. 
When at time $t$ we see a point $(i,s,t)$, the value $i$ is assigned to all $X$ with $a_i\geq s$ which have not already been assigned a value at an earlier time.
The set $\{X : a_i\geq s\}$ is a smaller simplex of size $s$ sharing the $i$-th corner of the full simplex.
An example is shown in \cref{fig:triangles}(a), where several such steps are visible.

\begin{figure}
  \centering
  \includegraphics[scale=0.6]{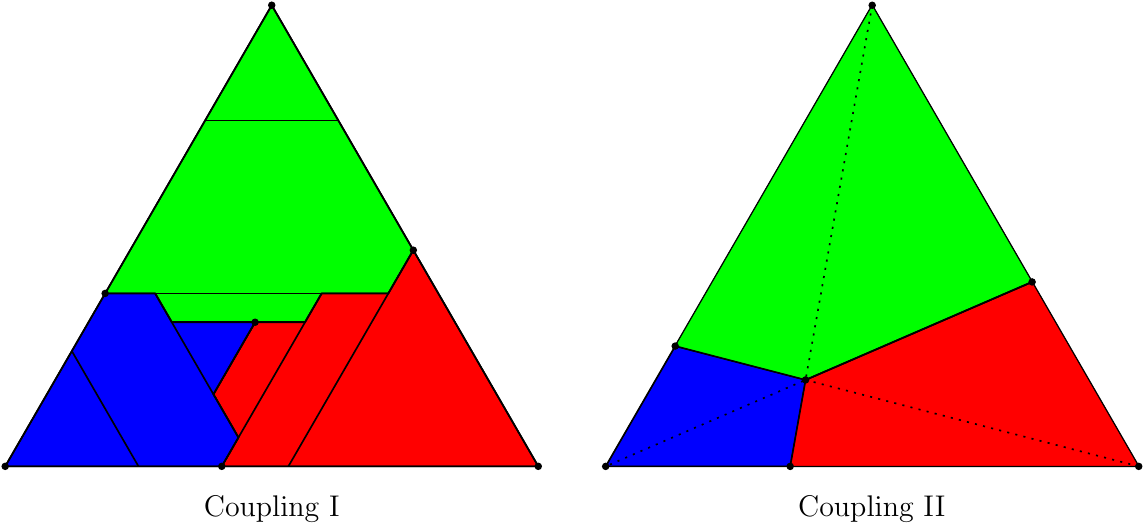}
  \caption{An illustration of the two couplings for three-valued random variables. 
    The associated partitions of the simplex are depicted. 
    (a) Coupling \rm{I} may be described by the overlapping ``territorial claims''. This is the Kleinberg--Tardos construction.
    (b) In Coupling~\II, the pivot point is uniform in the simplex. This is a special case of the Poisson functional representation of Li and El Gamal.}
  \label{fig:triangles}
\end{figure}

While Coupling I is very simple to describe and understand, the resulting partition of the simplex is evidently somewhat complex. 
In particular, the parts of the partition are not necessarily convex (though they are star-like).
Coupling~\II, while less transparent in its construction, yields a remarkably simple partition.
The interfaces between the parts $A_i$ are given by relations on the ratios, with $A_i$ adjacent to $A_j$ where $E_i/a_i = E_j/a_j$ (where $E_i$ are the exponentials used in the construction). 
This is a hyperplane passing through all but two vertices of the simplex.
Indeed, the entire partition is determined by a unique point $U$ where $E_i/a_i$ is the same for all $i$. 
Since $E_i$ are independent exponential random variables, $U$ is a uniform point in the simplex. 
The hyperplanes passing though $U$ and any $n-2$ of the corners give the partition of the simplex.
This is shown in \cref{fig:triangles}(b).

\paragraph{Sharpness of couplings.}
Both Coupling I and Coupling \II\ satisfy that $\P(X \neq Y) \le F(\dTV(X,Y))$ for any two random variables $X$ and $Y$, and for both constructions there are pairs of random variables for which they do no better.

For Coupling I, in the case of continuous random variables with respect to the Lebesgue measure, or for any $\mu$ with no atoms, Coupling I achieves the disagreement bound $F$ precisely, and no better.
As remarked above, we can also use Coupling I in the discrete case, where the random variable $X_i$ has density $f_i$ with respect to the counting measure.
In this case, it is possible that $X=Y$ even if distinct points $(x,s,t)$ and $(x',s',t')$ determine their value, since it may happen that $x=x'$. Indeed, the second term on the right-hand side of~\eqref{eq:coup1_pr} is zero if and only if, for every $x$, either $\P(X=x)$ and $\P(Y=x)$ are equal or one of them is zero.

For Coupling \II, suppose that $\cS$ consists of discrete random variables taking values in $U$.
An inspection of the inequality used in~\eqref{eq:3} reveals that there is equality in~\eqref{eq:3} if and only if $p_u=q_u$ or $p_u \wedge q_u = 0$, which is the same condition as for Coupling I.
In any other case, both couplings yield a disagreement probability which is strictly smaller than $F(\dTV(X,Y))$. 

\paragraph{Comparison of the disagreement probabilities.}
Since the two couplings achieve the worst-case disagreement probability $F$ in the same cases, it is natural to ask how they compare in general.
It turns out that Coupling \II\ is not only geometrically simpler as seen in \cref{fig:triangles}, but also achieves better disagreement probabilities than Coupling I for any pair of discrete random variables.
In fact, for any two discrete random variables $X$ and $Y$ and any value $u$, the probability that $X=Y=u$ is at least as large under Coupling~\II\ than under Coupling~I. This is seen by comparing the formulas~\eqref{eq:1b} and~\eqref{eq:2}. Denote $p_v := \P(X=v)$ and $q_v := \P(Y=v)$. We must show that
\[ (1+|p_u-q_u|) \cdot (p_u \wedge q_u) \cdot \sum_v \frac{p_v}{p_u} \vee \frac{q_v}{q_u} \le \sum_v p_v \vee q_v .\]
Suppose without loss of generality that $p_u \le q_u$ and consider the set $S := \{ v : \frac{p_v}{p_u} \le \frac{q_v}{q_u} \}$. Then $p_v \le q_v$ for $v \in S$, so that $\sum_v p_v \vee q_v \ge 1 + \sum_{v \in S} (q_v - p_v)$. It thus suffices to show that
\[ (1 + q_u - p_u) \cdot \left(\frac{p_u}{q_u} \sum_{v \in S} q_v + \sum_{v \notin S} p_v\right) \le 1 + \sum_{v \in S} (q_v - p_v) .\]
Using that $\sum_{v \notin S} p_v = 1 - \sum_{v \in S} p_v$, we see that it suffices that
\[ q_u - p_u \le \sum_{v \in S} \left(q_v - p_v - (1 + q_u - p_u)(\tfrac{p_u q_v}{q_u} - p_v) \right) .\]
Using the assumption that $p_u \le q_u$ and rewriting the summand as $\frac{1}{q_u}(q_u-p_u)(q_v-p_uq_v+p_vq_u)$, we see that every term in the sum is non-negative.
Since $u \in S$, the inequality is easily seen to hold.
(In fact, the inequality is strict for some $u$'s, except for very simple cases.)

In the continuous setting, it is even easier to see that Coupling \II achieves a smaller disagreement probability than Coupling I.
Indeed, Coupling I gives a disagreement probability exactly equal to $F(\dTV(X,Y))$, while the inequality in the continuous version of \eqref{eq:3} is in general a strict inequality.

\paragraph{$k$-tuple disagreements.}
We have shown that both couplings are ``nearly optimal'' for disagreements among pairs of random variables. In fact, both couplings are also nearly optimal (in a similar sense) for disagreements among $k$-tuples of random variables. 
Namely, for any $k$ random variables $X_1,\dots,X_k$, the probability they are not all equal under either coupling is comparable to its smallest possible value $\alpha := 1-\sum_u \P(X_1=u) \wedge \dots \wedge \P(X_k=u)$ (given by the optimal coupling of $X_1,\dots,X_k$ and no others).
Precisely, under either coupling, we have
\[ \P(X_1,\dots,X_k\text{ are not all equal}) \le \frac{k\alpha}{1+(k-1)\alpha} \le k \alpha .\]
This follows from
\[ \P(X_1=\dots=X_k) \ge \frac{\sum_u \P(X_1=u) \wedge \dots \wedge \P(X_k=u)}{\sum_u \P(X_1=u) \vee \dots \vee \P(X_k=u)} ,\]
which can be shown for Coupling~I by a similar computation as in~\eqref{eq:1} and for Coupling~\II\ by a similar computation as in~\eqref{eq:2} and~\eqref{eq:3}.

For certain collections $\cS$ of random variables, the latter bound cannot be improved.
For example, consider the set $\cS$ of $n \ge k$ random variables $S_1,\dots,S_n$ such that each $S_i$ is uniform on $\{1,\dots,n\} \setminus \{i\}$.
In any coupling of $S_1,\dots,S_n$, there exists a subset $X_1,\dots,X_k$ of the random variables for which the reverse inequality holds.
To see this, note that the number of subsets $I \subset \{1,\dots,n\}$ of size $k$ for which not all $\{S_i\}_{i \in I}$ are equal is always at least $\binom{n-1}{k-1}$.
Thus, in any coupling, there must be such a subset $I$ for which the probability of this event is at least $\binom{n-1}{k-1} / \binom{n}{k} = k/n$.
On the other hand, for any $k$ of the random variables $X_1,\dots,X_k$,
\[ \frac{\sum_u \P(X_1=u) \wedge \dots \wedge \P(X_k=u)}{\sum_u \P(X_1=u) \vee \dots \vee \P(X_k=u)} = \frac{(n-k)/(n-1)}{n/(n-1)} = 1 - \frac{k}{n} .\]

\section{Optimality of disagreement bounds}
\label{sec:optimality}

In this section, we investigate the optimality of \cref{T:main}.
As noted, it is natural to ask whether there are any disagreement bounds smaller than $F$.
The first set of results are lower bounds on $f(x)$ for any single $x$, and we do not believe these are optimal for generic $x$.
The second set of results lead to \cref{T:optimal-bound}, which states that there is no disagreement bound that is less than $F$ globally.

\subsection{Local optimality of $F$}

The trivial lower bound (see \cref{T:couple2}) is that any disagreement bound must have $f(x)\geq x$ for all $x$.
The example presented just before \cref{T:main}, of three variables each taking two possible values, shows that any disagreement bound must have $f(\frac 12)\geq \frac 23 = F(\frac 12)$.
This is generalized by the following.

\begin{prop}\label{prop:1/n-bound}
  Any disagreement bound $f$ must have
  \[
    f(\tfrac1n) \geq F(\tfrac1n) = \tfrac{2}{n+1} \qquad\text{for any integer $n \ge 1$}.
  \]
  In particular, $ax$ is a disagreement bound for $a=2$, but not for any smaller $a$.
\end{prop}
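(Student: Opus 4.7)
The plan is to exhibit, for each $n \geq 1$, a finite family of random variables with pairwise total-variation distance exactly $1/n$ such that no coupling can keep every pairwise disagreement probability below $2/(n+1)$. Setting $N := n+1$, the natural candidate (generalizing the three-variable example from the introduction) is $S_1,\ldots,S_N$ where $S_i$ is uniform on $\{1,\ldots,N\} \setminus \{i\}$. A direct computation gives $\dTV(S_i,S_j) = 1/n$ for every $i \neq j$; this is essentially the family featured later in the ``$k$-tuple disagreements'' discussion.

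The core step is a combinatorial upper bound on agreements in every realization. Fix an outcome $(s_1,\ldots,s_N)$ of a coupling and let $k_u := |\{i : s_i = u\}|$, so $\sum_u k_u = N$. Since $s_i \neq i$ for all $i$, no more than $N-1$ of the variables can take the value $u$, so $k_u \leq N-1$. Subject to these constraints, the number of agreeing pairs $\sum_u \binom{k_u}{2}$ is maximized (by convexity of $\binom{\cdot}{2}$) when a single $k_u$ equals $N-1$ and another equals $1$, which gives the bound $\binom{N-1}{2}$. Consequently, every outcome has at least $\binom{N}{2} - \binom{N-1}{2} = n$ disagreeing pairs.

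Averaging across the $\binom{n+1}{2}$ pairs, any coupling must produce some pair $(i,j)$ with $\P(S_i \neq S_j) \geq n/\binom{n+1}{2} = 2/(n+1)$, forcing $f(1/n) \geq 2/(n+1) = F(1/n)$. For the ``in particular'' clause, $F(x) \leq 2x$ together with \cref{T:main} shows that $f(x) = 2x$ is a disagreement bound, while for any $a < 2$ the inequality $a/n < 2/(n+1)$ holds for all sufficiently large $n$, contradicting the lower bound just proved. The only nontrivial point is recognizing that the constraint $k_u \leq N-1$ is precisely what drops the maximum of $\sum_u \binom{k_u}{2}$ from the unconstrained value $\binom{N}{2}$ to $\binom{N-1}{2}$; once this is noted, the remainder is straightforward averaging.
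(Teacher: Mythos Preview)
Your proof is correct and follows essentially the same approach as the paper: the same family $S_i$ uniform on $\{1,\ldots,N\}\setminus\{i\}$, the same observation that not all $S_i$ can coincide (equivalently, your bound $k_u \le N-1$), leading to at least $n$ disagreeing pairs in every outcome, and the same averaging over $\binom{n+1}{2}$ pairs. You spell out the convexity step and the ``in particular'' clause more explicitly than the paper does, but the argument is identical.
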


\begin{proof}
  Consider the case when $\cS$ consists of $n+1$ random variables $X_0,\dots,X_n$, where each $X_i$ is uniform on $\{0,\dots,n\} \setminus \{i\}$.
  Then $\dTV(X_i,X_j)=\frac{1}{n}$ for any $i \neq j$.
  However, it is impossible for all variables $X_i$ to be equal, and therefore at least $n$ of the $\binom{n+1}{2}$ pairs must disagree.
  Thus, under any coupling,
  \[
    \sum_{i < j} \P(X_i \neq X_j) \ge n,
  \]
  and hence, $\P(X_i\neq X_j) \geq \frac{2}{n+1}$ for some $i \neq j$.
\end{proof}

The above proposition shows that $F$ provides the best possible value for a disagreement bound at any inverse integer. We do not know whether an analogous statement holds at every point $x \in (0,1)$. Nevertheless, we are able to provide a lower bound at any point $x$, which improves on the trivial lower bound $x$, and nearly matches $F(x)$ for small $x$.
See \cref{fig:bounds} for a comparison between $F$ and our lower bounds.

\begin{prop}\label{prop:all-x-bound}
  Any disagreement bound $f$ must have
  \[
  f(x) \ge \frac{2x}{1+\frac{1}{\lfloor \frac1x \rfloor}}
  \qquad\text{for any $x \in (0,1)$}.
  \]
  In particular, any disagreement bound $f$ satisfies $\liminf_{x \to 0} \frac{f(x)}{x} \ge 2$.
\end{prop}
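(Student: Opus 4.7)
The plan is to adapt the construction from \cref{prop:1/n-bound} to arbitrary $x \in (0,1)$ by interpolating the ``fixed-point-free'' distribution with the uniform distribution on $\{0,\dots,n\}$. Set $n := \lfloor 1/x \rfloor$, so that $x \in (1/(n+1), 1/n]$ and hence $1-nx \in [0,1)$. I would take $n+1$ random variables $X_0, \dots, X_n$ valued in $\{0, \dots, n\}$ with
\[
    \P(X_i = j) \;=\; \frac{1-nx}{n+1} + x\cdot \mathbf{1}_{j \neq i}.
\]
A direct calculation confirms these are valid distributions summing to $1$ and that $\dTV(X_i, X_k) = x$ for every $i \neq k$ (the only nonzero differences occur at $u=i$ and $u=k$, each equal to $\pm x$). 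This construction reduces exactly to the one in \cref{prop:1/n-bound} when $x = 1/n$.

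I would then combine two observations about any coupling of the $X_i$. First, since $\min_i \P(X_i = u) = (1-nx)/(n+1)$ for every $u \in \{0,\dots,n\}$ (the minimum being attained at $i=u$), we have
\[
    \P(X_0 = \dots = X_n) \;\le\; \sum_u \min_i \P(X_i = u) \;=\; 1 - nx,
\]
so the probability that the $X_i$ are not all equal is at least $nx$. Second, I would use the combinatorial fact that whenever the values $X_0,\dots,X_n \in \{0,\dots,n\}$ are not all equal, at least $n$ of the $\binom{n+1}{2}$ pairs disagree: writing $N_u := |\{i : X_i = u\}|$, the count $\sum_u \binom{N_u}{2}$ of agreeing pairs is maximized subject to $\sum_u N_u = n+1$ and $\max_u N_u \le n$ by the composition $(n,1,0,\dots,0)$, giving at most $\binom{n}{2}$ agreements and hence at least $n$ disagreements.

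Combining the two observations yields $\E\bigl[\#\{i<j : X_i \neq X_j\}\bigr] \ge n \cdot nx = n^2 x$, so by averaging over the $\binom{n+1}{2}$ pairs some pair $i \neq j$ must satisfy
\[
    \P(X_i \neq X_j) \;\ge\; \frac{n^2 x}{\binom{n+1}{2}} \;=\; \frac{2nx}{n+1} \;=\; \frac{2x}{1 + 1/n},
\]
which is exactly the claimed lower bound on $f(x)$. The asymptotic ``in particular'' statement is immediate, since $\lfloor 1/x\rfloor \to \infty$ as $x \to 0$, driving $2/(1+1/\lfloor 1/x\rfloor) \to 2$. I do not anticipate any substantive obstacle: the only mildly delicate point is the combinatorial maximization of $\sum_u \binom{N_u}{2}$, and everything else is a short verification that parallels \cref{prop:1/n-bound}, of which this is the natural interpolation.
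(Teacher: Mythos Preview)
Your proposal is correct and essentially identical to the paper's proof: the paper constructs the same $n+1$ random variables (writing the parameters as $\eps=(1-nx)/(n+1)$ with $\P(X_i=i)=\eps$ and $\P(X_i=j)=x+\eps$ for $j\neq i$), bounds $\P(X_0=\dots=X_n)\le(n+1)\eps=1-nx$, and uses the same ``at least $n$ disagreeing pairs'' observation to obtain $\sum_{i<j}\P(X_i\neq X_j)\ge n^2x$. Your justification of the combinatorial step via maximizing $\sum_u\binom{N_u}{2}$ is slightly more detailed than the paper's, which simply asserts it.
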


\begin{proof}
  We use a variant of the construction from the proof of \cref{prop:1/n-bound}.
  Fix $x \in (0,1)$, $n \ge 1$ and $\eps \ge 0$ such that $n(x+\eps)=1-\eps$.
  Consider the case when~$\cS$ consists of $n+1$ random variables $X_0,\dots,X_n$, where each $X_i$ takes the value $i$ with probability $\eps$, and takes any other value with probability $x+\eps$.
  Then $\dTV(X_i,X_j)=x$ for any $i \neq j$.
  Let $\P$ be some coupling of these variables. Observe that the variables $X_i$ are all equal with probability at most $(n+1)\eps = 1-nx$. 
  Thus, with probability at least $nx$, they are not all equal, in which case at least $n$ of the $\binom{n+1}{2}$ pairs must disagree. Therefore,
  \[
    \sum_{i<j} \P(X_i \neq X_j) \ge n^2 x,
  \]
  and hence, $\P(X_i\neq X_j) \geq \frac{2nx}{n+1}$ for some $i \neq j$.
  Taking the largest $n$ compatible with a given $x$, namely $n=\lfloor \frac1x \rfloor$, yields the inequality.
\end{proof}

\begin{figure}
  \centering
  \includegraphics[width=.5\textwidth]{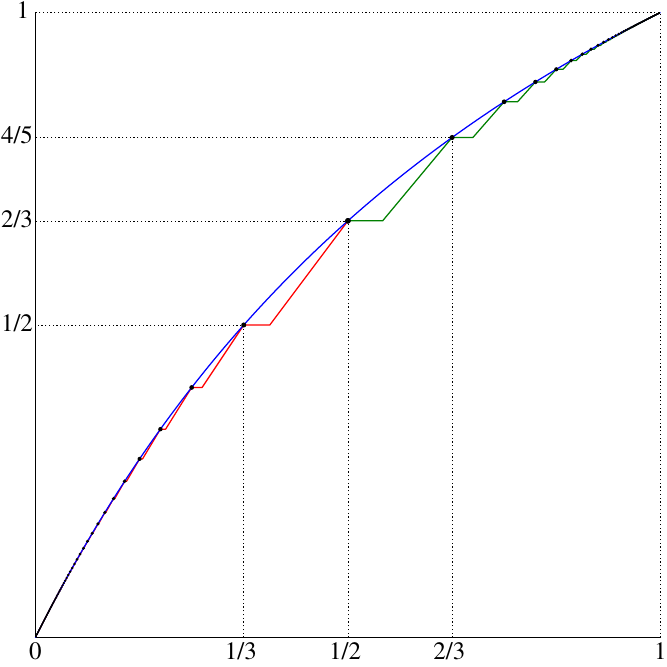}
  \caption{The disagreement bound $F(x)$ is shown in blue.
    Our pointwise lower bound on any disagreement bound is in red and green.
    The bullet points are from \cref{prop:1/n-bound,prop:near1}. The red segments are from~\eqref{eq:pointwise-bound1} and the green from~\eqref{eq:pointwise-bound2}. Both are obtained by interpolating the bullet points using \cref{cor:pointwise-bound}.
    Only the better of the two bounds is plotted.
  }
  \label{fig:bounds}
\end{figure}

One may think of Proposition~\ref{prop:all-x-bound} as converting the pointwise bound at $x=\frac 1n$ from Proposition~\ref{prop:1/n-bound} to a slightly worse pointwise bound at any point $x<\frac 1n$.
In fact, a similar perturbation argument shows that any pointwise lower bound can be converted to a slightly worse pointwise lower bound at any other point. This will be a simple consequence of the following.

\begin{prop}\label{prop:perturbed-disagreement-bound}
  Let $f$ be a disagreement bound and let $0 \le \delta \le \eps<1$.
  Define
  \[ \tilde{f}(x) := \frac{f((1-\eps)x + \delta)}{1-\eps+\delta} .\]
  Then $\tilde{f} \wedge 1$ is also a disagreement bound.
\end{prop}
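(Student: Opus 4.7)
The plan is to apply the hypothesis that $f$ is a disagreement bound not to the given family $(X_i)$ directly, but to a perturbed family $(Y_i)$ whose pairwise total variations have been linearly rescaled to $(1-\eps)\dTV(X_i, X_j) + \delta$. The $X_i$'s are then recovered from the $Y_i$'s by independent resampling on a small event of probability at most $2\eps$, which accounts for the additive loss in the advertised bound.

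For the perturbation step, enlarge the state space so as to have fresh atoms $\{w_i\}_{i \in I}$ and $w_0$ lying outside the supports of all $\nu_j := \operatorname{Law}(X_j)$, and define
\[
\tilde\nu_i := (1-\eps)\nu_i + \delta\,\delta_{w_i} + (\eps - \delta)\,\delta_{w_0}.
\]
The hypothesis $0 \le \delta \le \eps \le 1$ ensures $\tilde\nu_i$ is a probability measure. The key identity is that
$\tilde\nu_i - \tilde\nu_j = (1-\eps)(\nu_i - \nu_j) + \delta(\delta_{w_i} - \delta_{w_j})$
decomposes as a sum of two signed measures with disjoint supports, so their $L^1$-norms add, yielding
\[
\dTV(Y_i, Y_j) = (1-\eps)\,\dTV(X_i, X_j) + \delta.
\]

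Now apply $f$ to couple the finite family $(Y_i)$, so that $\P(Y_i \ne Y_j) \le f\bigl((1-\eps)\dTV(X_i, X_j) + \delta\bigr)$ for every pair. Enlarge the probability space by mutually independent samples $\hat X_i \sim \nu_i$, also independent of $(Y_j)$, and set
\[
X_i := \begin{cases} Y_i & \text{if } Y_i \in \operatorname{supp}(\nu_i), \\ \hat X_i & \text{otherwise.} \end{cases}
\]
A marginal check confirms $X_i \sim \nu_i$: the first case contributes $(1-\eps)\nu_i$ and the second contributes $\eps\,\nu_i$. On the event $E_{ij} := \{Y_i \in \operatorname{supp}(\nu_i)\} \cap \{Y_j \in \operatorname{supp}(\nu_j)\}$, whose complement has probability at most $2\eps$ by a union bound, we have $X_i = Y_i$ and $X_j = Y_j$, so the disagreements of $(X_i, X_j)$ and $(Y_i, Y_j)$ coincide. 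Hence
\[
\P(X_i \ne X_j) \le 2\eps + f\bigl((1-\eps)\dTV(X_i, X_j) + \delta\bigr) = \tilde f(\dTV(X_i, X_j)).
\]

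I do not anticipate a substantive obstacle. The one mildly fussy point is arranging the supports of $\nu_j$, $\delta_{w_i}$, and $\delta_{w_0}$ to be pairwise disjoint, which is handled by enlarging the ambient standard Borel space; since any such enlargement is still a standard Borel space, the assumption that $f$ is a disagreement bound applies without modification. Everything else reduces to the marginal computation and the union bound above.
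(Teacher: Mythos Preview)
Your proof is correct and is essentially the same construction as the paper's: both perturb each law by mixing in mass $\delta$ at a variable-specific fresh atom and mass $\eps-\delta$ at a shared fresh atom, verify that this sends $\dTV(X_i,X_j)$ to $(1-\eps)\dTV(X_i,X_j)+\delta$, couple the perturbed family via $f$, and then resample independently on the event (of probability $\eps$ per variable) that the perturbed variable lands on a fresh atom. The only cosmetic difference is that the paper stays inside $\R$ by picking the fresh atoms from the (necessarily nonempty) set of reals avoided by all finitely many supports, whereas you enlarge the ambient space and invoke standard-Borel isomorphism; both are fine.
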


\begin{proof}
 Let $\cS$ be a finite collection of random variables.
Let $U$ consist of those $u$ such that $\P(X=u)=0$ for all $X \in \cS$, and choose elements $\{u(X)\}_{X \in \cS}$ in $U$ and an additional element $u \in U$, all distinct from each other.
In order to use that $f$ is a disagreement bound, for each $X \in \cS$, we define a new random variable $X'$ by letting $X'$ equal $X$ with probability $1-\eps$, equal $u(X)$ with probability $\delta$, and otherwise equal $u$. Note that $\dTV(X',Y') = (1-\eps) \dTV(X,Y) + \delta$. Since $f$ is a disagreement bound, there exists a coupling of the prime variables so that $\P(X' \neq Y') \le f(\dTV(X',Y'))$ for any $X'$ and $Y'$.

To show that $\tilde f \wedge 1$ is a disagreement bound, we need to exhibit a coupling of the (original) variables in $\cS$ so that $\P(X \neq Y) \le \tilde{f}(\dTV(X,Y))$ for any $X,Y \in \cS$. Towards constructing such a coupling, consider a sequence of independent samples from the above coupling of the prime variables, and let $\{X^n\}_{X \in S, n \ge 1}$ denote these samples. Now take $X$ to equal $X^n$, where $n$ is the smallest index such that $X^n \notin \{u,u(X)\}$. It is straightforward that this indeed yields a coupling of the variables in $\cS$.
To see that it satisfies the required bound on the disagreement probabilities, fix two variables $X,Y \in \cS$ and note that $X \neq Y$ implies that either $X^1 \neq Y^1$ or $X^1=Y^1=u$. Thus,
\[ \P(X \neq Y) \le \P(X^1 \neq Y^1) + \P(X \neq Y \mid X^1=Y^1=u) \cdot \P(X^1=Y^1=u) .\]
Since $\P(X \neq Y \mid X^1=Y^1=u) = \P(X \neq Y)$, we obtain that
\[ \P(X \neq Y)
  \le \frac{\P(X^1 \neq Y^1)}{1-\P(X^1=Y^1=u)}
  \le \frac{f(\dTV(X',Y'))}{1-\eps+\delta}
  = \tilde{f}(\dTV(X,Y)) . \qedhere \]
\end{proof}

\begin{corollary}\label{cor:pointwise-bound}
  Let $g$ be the pointwise infimum over all disagreement bounds $f$.
  Then $g(x)$ is non-decreasing and $g(x)/x$ is non-increasing.
  Moreover, $g$ is Lipschitz continuous.
\end{corollary}

\begin{proof}
  By Proposition~\ref{prop:perturbed-disagreement-bound}, we have
  $g(x) \le \frac{g((1-\eps)x+\delta)}{1-\eps+\delta}$ for any $x \in (0,1)$ and $0 \le \delta \le \eps < 1$.
  Taking $\delta=\eps$ yields that $g(x) \le g(x+\eps(1-x))$ for any $\eps$,
  which shows that $g(x)$ is non-decreasing.
  Taking $\delta=0$ and $\eps=s/x$ yields that $g(x) \le g(x-s)/(1-s/x)$ for $s \in (0,x)$, which shows that $g(x)/x \leq g(x-s)/(x-s)$, and hence $g(x)/x$ is non-increasing.
  
  Using the two monotonicity properties and the fact that $g \le F$ by \cref{T:main}, we obtain that
  \[ 0 \le \frac{g(y)-g(x)}{y-x} \le \frac{g(x)}x \le \frac{2}{1+x} \leq 2 \qquad\text{for any }0\le x < y \le 1 .\]
  In particular, $g$ is Lipschitz continuous.
\end{proof}

Let us summarize the bounds we have shown in this section.
Let $g$ be as in the corollary above. \cref{prop:1/n-bound} tells us that $g(x)=F(x)$ for $x=\frac1n$ and any integer $n \ge 1$. \cref{cor:pointwise-bound} allows us to interpolate these to get a lower bound on $g$ at any point. Specifically, it shows that for any integer $n \ge 2$, we have
  \begin{equation}\label{eq:pointwise-bound1}
  g(x) \geq \max\left( \frac{2x(n-1)}{n}\ ,\  \frac{2}{n+1} \right) \qquad\text{for any }x\in[\tfrac{1}{n},\tfrac{1}{n-1}] .
  \end{equation}
Similarly, \cref{prop:near1} below shows that $g(x)=F(x)$ also holds for $x=1-\frac1n$ and any integer $n \ge 1$. Consequently, \cref{cor:pointwise-bound} implies that for any integer $n \ge 2$, we have
  \begin{equation}\label{eq:pointwise-bound2}
     g(x) \geq \max\left( \frac{2xn}{2n-1}\ ,\  \frac{2n-4}{2n-3} \right) \qquad\text{for any } x\in[1-\tfrac{1}{n-1},1-\tfrac{1}{n}] .
  \end{equation}
These bounds are depicted in \cref{fig:bounds}.

\subsection{Combinatorial improvements}

By more careful combinatorial analysis, we get the following extensions of \cref{prop:1/n-bound} which show that $F(x)$ is a lower bound at rational points in which the denominator is large in comparison to the numerator, as well as at $1-1/n$.
\cref{prop:k/n-bound} is proved here, while the proof of \cref{prop:near1} is deferred to the end of \cref{sec:comb2}.

\begin{prop}\label{prop:k/n-bound}
  Any disagreement bound $f$ must have
  \[
  f(\tfrac kn) \geq F(\tfrac kn) = \tfrac{2k}{n+k} \qquad\text{for any integers $k \ge 2$ and $n \ge 3k^2+6k$}.
  \]
\end{prop}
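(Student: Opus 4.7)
I plan to follow the strategy of Proposition~\ref{prop:1/n-bound}, whose proof constructs $n+1$ variables uniform on $[n+1]\setminus\{i\}$, observes that they cannot all simultaneously be equal, and extracts the bound $F(1/n)$ from the resulting forced disagreements. For Proposition~\ref{prop:k/n-bound} the goal is to produce an analogous family at total variation distance $k/n$ and derive $F(k/n)=2k/(n+k)$ via a more refined combinatorial analysis.

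The natural template is to take $N=\lfloor n/k\rfloor+1$ variables $X_i$, each uniform on an $n$-subset $A_i$ of a universe $V$, with $|A_i\cap A_j|=n-k$ for all $i\neq j$; writing $S_i:=V\setminus A_i$, this amounts to a family of subsets of $V$ of constant size with constant pairwise symmetric difference $2k$. When $k\mid n$ the simplest choice works: with $|V|=n+k$ and $S_1,\dots,S_N$ a partition of $V$ into $k$-element sets, the argument of Proposition~\ref{prop:1/n-bound} applied to $n/k$ delivers exactly $F(k/n)$, since then $\bigcap_i A_i=\emptyset$ and the counting argument ($N-1$ forced disagreements in each realization) gives average $\P(X_i\neq X_j)\geq 2/N=F(k/n)$.

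For $k\nmid n$ (the substantive case) the analogous naive construction with disjoint $k$-subsets of $V=[n+k]$ leaves $r:=n\bmod k$ elements in $\bigcap_i A_i$, so all variables can agree with probability up to $r/n$ in some coupling. This yields only the weaker bound $2qk/(n(q+1))$ of Proposition~\ref{prop:all-x-bound}, where $q:=\lfloor n/k\rfloor$; note that having $\bigcap_i A_i=\emptyset$ with $N=q+1$ at exact distance $k/n$ is actually impossible when $k\nmid n$, since this would contradict $F$ itself being a disagreement bound. Improving to $F(k/n)$ therefore requires either a different combinatorial design (non-sunflower families with larger blocks $|S_i|=k+s$, $|S_i\cap S_j|=s$, so that the pairwise symmetric difference remains $2k$) that better controls the common intersection, or a sharper estimate on the coupling exploiting joint constraints among the counts $M_v:=|\{i:X_i=v\}|$. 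Specifically, one would bound $\sum_{i<j}\P(X_i=X_j)=\E\sum_v\binom{M_v}{2}$ using not only the marginal constraints $\E M_v=d_v/n$ (where $d_v:=|\{i:v\in A_i\}|$) and $M_v\leq d_v$, but also the pairwise constraints $M_u+M_v\leq d_u+d_v-\lambda_{uv}$ with $\lambda_{uv}:=|\{i:u,v\in A_i\}|$, to obtain the target inequality $\sum_{i<j}\P(X_i=X_j)\leq\binom{N}{2}(n-k)/(n+k)$; averaging over pairs (and, if needed, passing to the monotone hull of $f$ to handle pairs with $\dTV<k/n$) then yields some pair with $\P(X_i\neq X_j)\geq F(k/n)$.

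The main obstacle is exhibiting a family of subsets and verifying the sharp linear-programming bound above in the regime $k\nmid n$; in this regime the bare sunflower construction is insufficient, and Deza-type extremal bounds on constant-intersection uniform families constrain the alternatives. The hypothesis $n\geq 3k^2+6k$ is presumably the precise quantitative threshold at which a suitable non-sunflower design with the needed covering property exists and the joint LP analysis yields the improvement from the trivial factor $(n-k)/n$ to the desired $(n-k)/(n+k)$.
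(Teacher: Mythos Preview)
Your proposal handles the case $k\mid n$ correctly, but for $k\nmid n$ it is only a plan with self-acknowledged gaps: you neither construct the non-sunflower design nor carry out the ``sharp LP bound'', and you note yourself that the main obstacle is unresolved. In fact, the small-family approach you pursue---taking roughly $n/k$ variables whose supports are $n$-subsets of a fixed $(n+k)$-element universe with pairwise intersection $n-k$---appears to be the wrong framework for the non-divisible case; your own observation that $\bigcap_i A_i=\emptyset$ is impossible when $k\nmid n$ (lest $F$ itself fail as a disagreement bound) shows that no sunflower-type family of this size can force the required number of disagreements by the naive counting argument, and there is no indication that Deza-type designs or pairwise LP constraints close the gap from $(n-k)/n$ to $(n-k)/(n+k)$.

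The paper's proof avoids all of this by working with a much larger family: take \emph{all} $\binom{n+k}{k}$ subsets $I\subset\{1,\dots,n+k\}$ of size $n$ and let $X_I$ be uniform on $I$. Any realization of a coupling is then an \emph{$(n,k)$-assignment} $z=(z_I)$ with $z_I\in I$, and the key step is a purely combinatorial lemma: for $n\ge ak^2+6k$ (with $a=1/\log\frac32<3$), every $(n,k)$-assignment has at least a $\frac{2k}{n+k}$-fraction of its distant pairs (those with $|I\setminus J|=k$) in disagreement. This lemma is proved by showing that an assignment minimizing the number of distant disagreements must, after relabelling, satisfy $z_I=1$ whenever $1\in I$; this is where the quadratic lower bound on $n$ enters, via a comparison of $\binom{n+k}{k}$ with $\binom{n-1}{k}$ that needs $n$ large relative to $k^2$. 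Once this structure is established, the number of distant disagreements is at least twice the number of distant pairs $(I,J)$ with $1\in I$ and $1\notin J$, which is exactly $\frac{k}{n+k}\binom{n+k}{k,k}$. Averaging over distant pairs then gives some pair with $\P(X_I\neq X_J)\ge F(k/n)$.

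So the hypothesis $n\ge 3k^2+6k$ has nothing to do with the existence of a combinatorial design; it is the threshold above which the greedy assignment $z_I=\min(I)$ provably minimizes distant disagreements. Your intuition that divisibility is the obstruction is misleading---the paper's argument is entirely uniform in $k$ and $n$.
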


\begin{prop}\label{prop:near1}
  Any disagreement bound $f$ must have
  \[
  f(1-\tfrac1n) \geq F(1-\tfrac1n) = \tfrac{2n-2}{2n-1} \qquad\text{for any integer }n \ge 2. 
  \]
\end{prop}

We next introduce a combinatorial lemma which we require for the proof of \cref{prop:k/n-bound}.
Fix integers $k,n \ge 1$.
Let $S_{n,k}$ be the collection of all $I\subset\{1,\dots,n+k\}$ of size $n$.
The Hamming \textbf{distance} between two sets $I$ and $J$ in $S_{n,k}$ is defined by
\[ d(I,J) := |I\setminus J| = |J\setminus I| .\]
A \textbf{$(n,k)$-assignment} is a selection of an element from each $I$ in $S_{n,k}$, namely, $z=(z_I)_{I\in S_{n,k}}$ with $z_I \in I$.
For such an assignment, we denote by $D_m(z)$ the number of \textbf{distance-$m$ disagreements}, defined by
\[ D_m(z) := \# \Big\{ (I,J) : d(I,J)=m \text{ and }z_I \neq z_J \Big\} .\]

In the remainder of this section, we focus on the case when $k \le n$ and $m=k$. A pair of sets $I$ and $J$ in $S_{n,k}$ are called \textbf{distant} if $d(I,J) = k$ (this is the maximal possible distance when $k\le n$).
Note that these are ordered pairs, and that the total number of distant pairs is the multinomial coefficient
\[ \binom{n+k}{k,k} = \frac{(n+k)!}{k!^2(n-k)!} .\]
Using this notation, the proof of \cref{prop:1/n-bound} relied on the simple fact that any $(n,1)$-assignment has at least $n$ distant disagreement pairs.
Equivalently, at least a $F(\frac1n)$-fraction of distant pairs disagree.
The following lemma shows that, when $n$ is large is comparison to $k$, any $(n,k)$-assignment has at least a $F(\frac kn)$-fraction of distant disagreements.

\begin{lemma}\label{lem:disagreement-large-n}
  Let $a=1/\log \frac 32 \approx 2.466$.
  For any $k \ge 2$, $n \ge ak^2+6k$ and $(n,k)$-assignment $z$, we have
  \begin{equation}\label{eq:disjoint-disagreement-bound}
    D_k(z) \ge \frac{2k}{n+k} \cdot \binom{n+k}{k,k} .
  \end{equation}
\end{lemma}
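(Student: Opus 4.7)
The plan is to decompose the agreement count by color class, verify tightness on a natural extremal example, and then prove a sharp per-color combinatorial bound.

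First, for each $e\in\{1,\dots,n+k\}$ let $N_e:=\{I\in S_{n,k}:z_I=e\}$ and $m_e:=|N_e|$; then $\sum_e m_e=\binom{n+k}{k}$ and $m_e\le\binom{n+k-1}{k}$. Writing $q_e$ for the number of ordered pairs $(I,J)\in N_e^2$ with $d(I,J)=k$---equivalently, the number of ordered pairs of disjoint $k$-subsets in the complementary family $\overline{N_e}:=\{[n+k]\setminus I:I\in N_e\}\subseteq\binom{[n+k]\setminus\{e\}}{k}$---one has $\binom{n+k}{k,k}-D_k(z)=\sum_e q_e$, so the lemma is equivalent to
\[
\sum_e q_e \;\le\; \binom{n+k-1}{k}\binom{n-1}{k} \;=\; \tfrac{n-k}{n+k}\binom{n+k}{k,k}.
\]

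Next I would check tightness via the assignment $z_I=\min I$. For $e\ge 2$, any $I,J\in N_e$ satisfy $\{1,\dots,e-1\}\cap(I\cup J)=\emptyset$, contradicting $I\cup J=[n+k]$, so $q_e=0$. The only contribution is $q_1$, the number of ordered pairs of disjoint $k$-subsets of $\{2,\dots,n+k\}$, which equals $\binom{n+k-1}{k}\binom{n-1}{k}$ exactly. This confirms the bound is tight and pinpoints the extremal structure: the worst case concentrates on a single dominant color class.

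The technical core is a per-color bound on $q_e$ in terms of $m_e$. The trivial estimate $q_e\le m_e\binom{n-1}{k}$, summed using $\sum m_e=\binom{n+k}{k}$, yields $\sum q_e\le \tfrac{n+k}{n}\cdot\binom{n+k-1}{k}\binom{n-1}{k}$, overshooting the target by exactly the factor $(n+k)/n$. I would therefore seek a refined, stability-type estimate of the form $q(\mathcal F)\le m\binom{n-1}{k}\cdot \psi(m)$ for $\mathcal F\subseteq\binom{[n+k-1]}{k}$ of size $m$, with $\psi<1$ unless $m=\binom{n+k-1}{k}$. Combined with the global constraint and a convexity or rearrangement step, this would force $\sum q_e$ to be maximized when a single $m_e$ is as large as possible, reproducing the extremal configuration and yielding the target bound.

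The main obstacle is this per-color estimate, which is an extremal problem about $k$-uniform families maximizing disjoint-pair count. The presence of $a=1/\log(3/2)$ in the hypothesis is suggestive of an iterative argument with geometric factor $3/2$ per step, applied on the order of $k$ times and hence explaining the quadratic threshold, while the lower-order $+6k$ absorbs boundary slack. Simple shifting attempts do not work out of the box here: the natural move reassigning an $I$ from color $j$ to color $i<j$ produces a net change in $\sum q_e$ whose sign depends on a delicate balance between two cross-class contributions. A more global symmetrization, or a direct extremal combinatorial input, is needed to extract the $(n+k)/n$ saving while tolerating a cumulative $(3/2)^k$-type loss; this is the heart of the proof.
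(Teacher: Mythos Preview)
Your setup is correct and the reformulation $\sum_e q_e \le \binom{n+k-1}{k}\binom{n-1}{k}$ is exactly the right target, with the tightness check at $z_I=\min I$ carried out cleanly. But the proposal is not a proof: you explicitly identify the ``technical core'' as a per-color stability estimate $q(\mathcal F)\le m\binom{n-1}{k}\psi(m)$ and then stop, saying such an estimate ``is the heart of the proof'' and that ``a more global symmetrization, or a direct extremal combinatorial input, is needed''. That missing step is the entire content of the lemma. The trivial bound $q_e\le m_e\binom{n-1}{k}$ overshoots by exactly $(n+k)/n$, and nothing in your outline closes that gap.

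The paper's argument is different in kind and avoids any per-color extremal estimate. It works by contradiction and minimality: assume $z$ minimizes $D_k$ among counterexamples, and let $1$ be the most popular value, taken $N_1$ times. The assumed violation of the bound forces $N_1\ge \frac{n-k}{n+k}\binom{n}{k}$. Now perform a single local swap: if some $I\ni 1$ has $z_I\ne 1$, change $z_I$ to $1$. The number of new distant disagreements is at most $2\bigl[\binom{n+k}{k}-N_1\bigr]$, while the number eliminated is at least $2\bigl[\binom{n-1}{k}+N_1-\binom{n+k}{k}\bigr]$. Minimality then requires $2\binom{n+k}{k}\ge 2N_1+\binom{n-1}{k}$; combining with the lower bound on $N_1$ and the crude estimate $\binom{n+k}{k}\le e^{k^2/(n-k)}\binom{n}{k}$ gives a contradiction precisely when $n\ge ak^2+6k$. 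Hence every $I\ni 1$ has $z_I=1$, and $D_k(z)$ is at least twice the number of distant pairs $(I,J)$ with $1\in I$, $1\notin J$, which equals $\frac{2k}{n+k}\binom{n+k}{k,k}$.

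Your guess about the constant $a=1/\log\frac32$ is also off: it does not arise from an iteration with ratio $3/2$ repeated $k$ times, but from a single inequality. One needs roughly $e^{k^2/(n-k)}<\frac{(n-k)(3n+k)}{2n(n+k)}\approx\frac32$, i.e.\ $k^2/(n-k)<\log\frac32$, giving $n> ak^2+k$; the extra $5k$ absorbs the lower-order error in the $3/2$ approximation.
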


The condition $n\geq ak^2+O(k)$ is an artifact of the following proof, and can no doubt be improved.
The bound \eqref{eq:disjoint-disagreement-bound} is motivated by the idea that up to a permutation of the elements $\{1,\dots,n+k\}$, the way to minimize disagreements is to take $z_I = \min(I)$, for which there is equality in~\eqref{eq:disjoint-disagreement-bound}.
We do not know what the minimal $n$ above which this assignment minimizes $D_k(z)$ is. 
We note however that~\eqref{eq:disjoint-disagreement-bound} does not necessarily hold for small~$n$. 
For example, the $(3,2)$-assignment $z$ given by $z_{\{i,j,k\}} = 2(i+j+k) \pmod{5}$ has $|D_2(z)|=20$, compared to $24$ for $z_I=\min(I)$.

\begin{proof}
  At the heart of the proof is the observation that, when $n$ is large enough, most sets $I$ contain any given element and most pairs of sets are distant.
  Suppose for a contradiction that there is a counterexample to the lemma, and let $z$ be an assignment with minimal possible $D_k(z)$.
  Without loss of generality, we assume that the most common value among the $z_I$ is $1$, and let
  \[ N_1 := \# \{I : z_I=1\} \]
  be the number of times it appears.
  Since each $z_I$ agrees with at most $N_1$ other variables, by considering distant pairs $(I,J)$ having $z_I=z_J$, it is clear that
  \[ \binom{n+k}{k,k} - D_k(z) \le N_1 \cdot \binom{n+k}{k} .\]
  Therefore,
  \begin{equation}\label{eq:N-lower-bound}
    N_1 
    \ge \frac{\left(1-\frac{2k}{n+k}\right)\binom{n+k}{k,k}}{\binom{n+k}{k}} 
    = \frac{n-k}{n+k} \cdot \binom{n}{k} .
  \end{equation}
  Note that, if $n$ is large enough, this shows that most $I$ have $z_I=1$.

  We claim that minimality of $D_k(z)$ implies that every $I$ such that $1\in I$ has $z_I=1$. 
  Indeed, suppose some $I$ has $1\in I$ and $z_I\neq 1$, and consider an assignment $z'$ which equals $z$ except that $z'_I=1$.
  This modification introduces at most $2\left[\binom{n+k}{k}-N_1\right]$ new distant disagreement pairs (the $2$ is since these are ordered pairs).
  Among the total $\binom{n+k}{k}$ sets $J$, there are $\binom{n-1}{k}$ sets $J$ that are both distant from $I$ and contain $1$. 
  Among those, at most $\binom{n+k}{k}-N_1$ do not have $z_J=1$. 
  Thus, the number of eliminated distant disagreement pairs is at least
  $2\left[\binom{n-1}{k} + N_1 - \binom{n+k}{k}\right]$.
  This contradicts minimality of $D_k(z)$ when
  \[ \binom{n+k}{k}-N_1 < \binom{n-1}{k} + N_1 - \binom{n+k}{k} .\]
  In light of \eqref{eq:N-lower-bound} this holds when
  \[ \binom{n+k}{k} < \frac12 \binom{n-1}{k} + \frac{n-k}{n+k}\binom{n}{k} .\]
  Using $\binom{n+k}{k} \leq e^{k^2/(n-k)} \binom{n}{k}$, this is seen to hold when $e^{k^2/(n-k)} < \frac{(n-k)(3n+k)}{2n(n+k)}$, which in turn holds for every $n\geq ak^2 + 6k$.
  
  Thus, we have proved that a counterexample with minimal $D_k(z)$ has $z_I=1$ for every $I$ with $1\in I$.
  Hence, the number of distant disagreement pairs is at least twice the number of distant pairs $(I,J)$ having $1\in I$ and $1\not\in J$, the latter being
  $\binom{n+k-1}{k,k-1} = \frac{k}{n+k} \binom{n+k}{k,k}$.
\end{proof}

We are now ready to prove \cref{prop:k/n-bound}.

\begin{proof}[Proof of \cref{prop:k/n-bound}]
  The proof uses yet another variant of the construction from the proof of \cref{prop:1/n-bound}.
  Let $\cS$ consist of $\binom{n+k}{k}$ random variables $\{X_I\}_{I \subset \{1,\dots,n+k\}, |I|=n}$, where each $X_I$ is uniform on $I$.
  Then $\dTV(X_I,X_J)=\frac{k}{n}$ for distant $I$ and $J$.
  Let $\P$ be any coupling of these variables.
  Since $X=(X_I)_I$ is a $(n,k)$-assignment, \cref{lem:disagreement-large-n} implies that, almost surely,
  \[ D_k(X) \ge F\left(\tfrac kn\right) \cdot \binom{n+k}{k,k} .\]
  In other words, the fraction of distant pairs $(I,J)$ with $X_I\neq X_J$ is at least $F(\frac kn)$.
  Thus, there exist distant $I$ and $J$ such that $\P(X_I\neq X_J) \geq F(\frac kn)$.
\end{proof}

\subsection{Global optimality of $F$}
\label{sec:comb2}

Our goal now is to prove \cref{T:optimal-bound}.
The main step is the following.

\begin{prop}\label{prop:optimal-bound-step}
  Let $n,k \ge 1$, and let $c_m$ be the probability that $d(I,J)=m$, where $I$ and $J$ are two independently chosen uniform subsets of $\{1,\dots,n+k\}$ of size $|I|=|J|=n$.
  Then any disagreement bound $f$ satisfies
  \[ \sum_{m=1}^{k \wedge n} c_m f(\tfrac {\alpha m}n) \ge \alpha \sum_{m=1}^{k \wedge n} c_m F(\tfrac mn) \qquad\text{for all } 0 \le \alpha \le 1.\]
\end{prop}

Let us see how \cref{T:optimal-bound} follows from \cref{prop:optimal-bound-step}.

\begin{proof}[Proof of \cref{T:optimal-bound}]
  Let $f$ be a disagreement bound such that $f \le F$ on an interval $(a,b) \subset [0,1]$.
  We must show that $f$ coincides with $F$ on $(a,b)$. To illustrate the basic idea behind the proof, observe that if $a=0$, then \cref{prop:optimal-bound-step} easily yields that $f(x)=F(x)$ for every rational $x \in (0,b)$. Indeed, if $k/n<b$ is a rational number such that $f(\frac{k}{n}) < F(\frac{k}{n})$, then the proposition is violated with that $k$ and $n$ (and with $\alpha=1$).
  Since there is no obvious monotonicity or continuity for disagreement bounds, the proof below relies on a perturbative argument to address the case of irrational points and of $a>0$.
  
  Let $x \in (a,b)$. 
  For an arbitrary $n \ge 1$, let $m$ be such that
  \[ \tfrac{m-1}n < x \le \tfrac mn .\]
  Set $\alpha := \frac{nx}m$ so that $x=\frac {\alpha m}n$ and $0\le 1-\alpha<\frac1m < \frac1{nx}$.
  We henceforth regard $x,m,n,\alpha$ as fixed, and we aim to choose a suitable $k$ for which to apply \cref{prop:optimal-bound-step}.
  By standard estimates (using Sterling's approximation), there exists some $k \ge m$ such that
  \[ c_m \ge \frac c{\sqrt n} \qquad\text{and}\qquad \sum_{\frac{am}x<i<\frac{bm}x} c_i \ge 1-Ce^{-cn},\]
  where $C,c>0$ are constants which do not depend on~$n$. 
  With this choice of $k$, by \cref{prop:optimal-bound-step},
  \[ \sum_{i=1}^{k \wedge n} c_i f\left(\tfrac {ix}m \right) \ge \big(1-\tfrac{1}{nx} \big) \cdot \sum_{i=1}^{k \wedge n} c_i F\left(\tfrac in \right) .\]
  Since $f(\frac{ix}m) \le F(\frac{ix}m)$ for all $\frac{am}x < i < \frac{bm}x$ and since $f,F \le 1$, we obtain that
  \begin{align*}
  c_m(F(x) - f(x)) 
   &\le \sum_{i=1}^{k \wedge n} c_i \left( F\left(\tfrac {ix}m \right) - f\left(\tfrac {ix}m \right) \right) + Ce^{-cn} \\
   &\le \tfrac{1}{nx} \cdot \sum_{i=1}^{k \wedge n} c_i F\left(\tfrac in \right) + Ce^{-cn} \le \tfrac{1}{nx}+Ce^{-cn} .
  \end{align*}
  By the lower bound on $c_m$, we have $F(x)-f(x) \le \frac{1}{c\sqrt{n}x} + \frac Cc \sqrt{n}e^{-cn}$. 
  Since $n$ may be taken arbitrarily large, it follows that $f(x)=F(x)$.
\end{proof}

The proof of \cref{prop:optimal-bound-step} requires additional combinatorial lemmas.
We have seen in \cref{lem:disagreement-large-n} that, when $n$ is large is comparison to $k$, any $(n,k)$-assignment has at least a $F(\frac kn)$-fraction of distant disagreements, i.e., $D_k(z) \geq F(\frac kn) \binom{n+k}{k,k}$.
The analogous statement for distance-$m$ pairs is that the number of distance-$m$ disagreements is at least a $F(\frac mn)$-fraction of all distance-$m$ pairs, i.e.,
\[ D_m(z) \ge F\left(\tfrac mn\right) \cdot \binom{n+k}{m,m,k-m} .\]
While we have no proof of this inequality for any particular $m$, the following lemmas establish a linear combination of these bounds for different $m$'s.

\begin{lemma}\label{lem:disagreement}
  For any $n,k \ge 1$ and any $(n,k)$-assignment $z$, we have
  \begin{equation}\label{eq:agreement-bound2}
    \sum_{m=1}^{k \wedge n} D_m(z) \ge \sum_{i=0}^{k-1} 2\binom{n+i}{i+1} \binom{n+i}{i} .
  \end{equation}
\end{lemma}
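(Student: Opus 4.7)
My plan is to recast $\sum_{m=1}^k D_m(z)$ as a global count of disagreements, irrespective of distance. For each value $v$, let $N_v=N_v(z):=\#\{I\in S_{n,k}:z_I=v\}$; then $\sum_v N_v^2$ counts ordered pairs $(I,J)$ with $z_I=z_J$, and since $D_0(z)=0$ trivially, we obtain
\[
\sum_{m=1}^k D_m(z) \;=\; \binom{n+k}{k}^2 - \sum_v N_v^2 .
\]
The lemma therefore reduces to an \emph{upper} bound on $\sum_v N_v^2$. The extremal configuration built into the right-hand side is the greedy assignment $z_I=\min(I)$, for which one computes $N_v=\binom{n+k-v}{n-1}$ for $v=1,\dots,k+1$. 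Accordingly, I would first record the algebraic identity
\[
\binom{n+k}{k}^2 - \sum_{v=1}^{k+1}\binom{n+k-v}{n-1}^2 \;=\; \sum_{i=0}^{k-1} 2\binom{n+i}{i+1}\binom{n+i}{i} ,
\]
which is routine: apply Pascal's rule $\binom{n+i}{i}=\binom{n+i-1}{i}+\binom{n+i-1}{i-1}$, expand the square, and telescope $\binom{n+k}{k}^2=\sum_{i=0}^{k}\bigl[\binom{n+i}{i}^2-\binom{n+i-1}{i-1}^2\bigr]$ (using $\binom{n-1}{-1}=0$).

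The combinatorial heart of the argument is to show that the greedy assignment in fact maximizes $\sum_v N_v^2$ over all valid $z$. I intend to do this by majorization. Set $M_v:=\binom{n+k-v}{n-1}$ for $v=1,\dots,k+1$ and $M_v:=0$ otherwise, so that both $(N_v)$ and $(M_v)$ are nonnegative sequences summing to $|S_{n,k}|=\binom{n+k}{n}$. After sorting in decreasing order, I claim that $(N_v)$ is majorized by $(M_v)$; Karamata's inequality applied to the convex function $x\mapsto x^2$ then yields $\sum_v N_v^2\le\sum_v M_v^2$, which together with the identity above closes the proof.

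To verify the majorization, fix $j$ and let $T$ be any $j$-element set of values. Every $I$ contributing to $\sum_{v\in T}N_v$ has $z_I\in T\cap I$, so $I$ meets $T$. The number of size-$n$ subsets of $\{1,\dots,n+k\}$ meeting a fixed $j$-subset depends only on $j$ and equals $\binom{n+k}{n}-\binom{n+k-j}{n}$; by the hockey-stick identity, this coincides with $\sum_{v=1}^j\binom{n+k-v}{n-1}=\sum_{v=1}^j M_v$. Choosing $T$ to index the top $j$ entries of $(N_v)$ yields the majorization inequality for every $j$. No single step is difficult in isolation; the only real obstacle is recognizing the reduction to bounding $\sum_v N_v^2$, after which the problem resolves neatly via majorization together with the Pascal-plus-telescoping identity.
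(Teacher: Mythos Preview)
Your proof is correct. It shares the paper's opening reduction---writing $\sum_m D_m(z)=\binom{n+k}{k}^2-\sum_v N_v^2$ and reducing to the claim that the greedy assignment $z_I=\min(I)$ maximizes $\sum_v N_v^2$---but diverges in how that claim is established and in how the final value is computed.

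The paper argues by a local exchange: after relabelling so that $N_1\ge N_2\ge\cdots$, whenever $z_I\neq\min(I)$ one replaces $z_I$ by $\min(I)$, which strictly increases $\sum_v N_v^2$ via $(a+1)^2+(b-1)^2>a^2+b^2$ for $a\ge b$; iterating (with re-sorting) terminates at the greedy assignment. It then evaluates $D$ for greedy directly as $\sum_i 2N_i\sum_{j>i}N_j$, using that $\sum_{j>i}N_j=\binom{n+k-i}{k-i}$, which immediately gives the stated right-hand side without a separate identity. Your route instead proves the stronger statement that the sorted $(N_v)$ is majorized by $(M_v)$, via the clean observation that $\sum_{v\in T}N_v\le\#\{I:I\cap T\neq\emptyset\}=\binom{n+k}{n}-\binom{n+k-|T|}{n}=\sum_{v=1}^{|T|}M_v$, and then invokes Karamata. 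This is conceptually tidy and avoids the mild bookkeeping of repeated re-sorting in the exchange argument; the price you pay is the extra Pascal-and-telescope identity to match the right-hand side, whereas the paper's tail-sum computation gets there in one line. Either way, the substance of the argument is the same optimality of the greedy assignment.
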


\begin{proof}
  We seek a lower bound on the total number of disagreements $D:=\sum_{m=1}^{k \wedge n} D_m(z)$.
  For $i \in \{1,\dots,n+k\}$, let $N_i$ be the number of variables $z_I$ that equal~$i$.
  Without loss of generality, we may assume that $N_1 \ge N_2 \ge \dots \ge N_{n+k}$.
  The number of disagreements is precisely
  \[ D = \sum_{i \neq j} N_i N_j = \binom{n+k}{k}^2 - \sum_i N_i^2 .\]
  We claim that the above is minimized by the ``greedy'' assignment $z_I = \min(I)$ which has
  \[ N_i = \binom{n+k-i}{k-i+1} \qquad \text{for $i\leq k+1$} ,\]
  and $N_i=0$ for $i>k+1$. 
  Indeed, minimizing $D$ is equivalent to maximizing $\sum_i N_i^2$. 
  To see that the greedy choice maximizes this latter quantity, note that if 
  $a\geq b$ then $(a+1)^2 + (b-1)^2 > a^2+b^2$.  
  Since $N_i$ are decreasing, if $z_I \neq \min(I)$ for some $I$, then decreasing $z_I$ will increase $\sum_i N_i^2$.
  
  Finally, the number of $I$ with $\min(I)>i$ is $\binom{n+k-i}{k-i}$, and so for the greedy assignment we have
  \[ D = \sum_i 2N_i \sum_{j>i} N_j 
  = \sum_{i=1}^k 2\binom{n+k-i}{k-i+1}\binom{n+k-i}{k-i} . \]
  The lemma follows after a change of the index of summation.
\end{proof}

\begin{lemma}\label{lem:combi-identity} 
  For any $n,k \ge 1$, we have
  \[ \sum_{i=0}^{k-1} \binom{n+i}{i+1} \binom{n+i}{i} = \sum_{m=1}^{k\wedge n} \frac{m}{n+m} \cdot \binom{n+k}{m,m,k-m} .\]
\end{lemma}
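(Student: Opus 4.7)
The plan is to evaluate $\sum_{m=1}^k D_m(z^*)$ for the greedy $(n,k)$-assignment $z^*_I := \min(I)$ in two different ways and compare the resulting expressions. The first evaluation is essentially free: the proof of \cref{lem:disagreement} explicitly computes the minimum of $\sum_m D_m(z)$ and exhibits $z^*$ as a minimizer, so inequality \eqref{eq:agreement-bound2} is tight at $z^*$ and
\[
\sum_{m=1}^k D_m(z^*) \;=\; 2\sum_{i=0}^{k-1}\binom{n+i}{i+1}\binom{n+i}{i},
\]
which is twice the LHS of the desired identity.

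For the second evaluation, I would compute $D_m(z^*)$ separately for each $m$ and show that
\[
D_m(z^*) \;=\; \frac{2m}{n+m}\binom{n+k}{m,m,k-m},
\]
which is twice the $m$-th summand on the RHS. An ordered distance-$m$ pair $(I,J)$ satisfies $z^*_I \neq z^*_J$ precisely when $s := \min(I \cup J)$ lies in $(I\setminus J)\cup(J\setminus I)$; by the symmetry between $I$ and $J$, it suffices to count pairs with $s \in I\setminus J$ and multiply by $2$. For such a pair, $\{1,\dots,s-1\}$ must lie in $\overline{I\cup J}$, which forces $s \le k-m+1$, and the $n+k-s$ remaining elements of $\{s+1,\dots,n+k\}$ are distributed freely among the four blocks $(I\setminus J)\setminus\{s\}$, $J\setminus I$, $I\cap J$, $\overline{I\cup J}\setminus\{1,\dots,s-1\}$ of sizes $m-1,\, m,\, n-m,\, k-m-s+1$. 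This gives
\[
D_m(z^*) \;=\; 2\sum_{s=1}^{k-m+1}\binom{n+k-s}{m-1,\,m,\,n-m,\,k-m-s+1}.
\]

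After pulling out the common factor $\tfrac{1}{(m-1)!\,m!\,(n-m)!}$ and substituting $r := k-m-s+1$, the inner sum collapses to $\sum_{r=0}^{k-m}\binom{n+m+r-1}{r} = \binom{n+k}{k-m}$ by the hockey-stick identity, and a short simplification yields the displayed formula for $D_m(z^*)$. Summing over $m$ and equating with the first evaluation, then dividing by $2$, completes the proof. The main obstacle is the fussy bookkeeping in the second step: setting up the correct four-block enumeration keyed to the parameter $s$, and verifying that the resulting sum of multinomials collapses exactly to the clean expression $\tfrac{m}{n+m}\binom{n+k}{m,m,k-m}$. The substantive algebraic input is the hockey-stick identity; everything else is bookkeeping.
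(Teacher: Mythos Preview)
Your argument is correct, but it follows a different route from the paper's. The paper gives a self-contained double-counting proof: it defines the set $E$ of ordered pairs $(I,J)$ of subsets of $\{1,\dots,n+k\}$ with $|I|=k$, $|J|=k-1$, and $\min(I^c)=\min(J^c)$, and counts $|E|$ two ways --- by the value of $\min(I^c)$ (giving the LHS term by term) and by $|I\setminus J|=m$ (giving the RHS, after rewriting $\frac{m}{n+m}\binom{n+k}{m,m,k-m}=\binom{n+k}{k-m}\binom{n+m-1}{m-1}\binom{n}{m}$). No appeal to the surrounding $(n,k)$-assignment framework is made.

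Your approach instead identifies the identity as the equality case of \cref{lem:disagreement} decomposed by distance: you compute $\sum_m D_m(z^*)$ for the greedy assignment both via the explicit total from the proof of \cref{lem:disagreement} and via a direct per-$m$ enumeration collapsed by the hockey-stick identity. This is conceptually appealing, since it explains \emph{why} the identity arises in the paper (it is exactly the statement that the greedy minimizer has $D_m(z^*)=F(\tfrac{m}{n})\binom{n+k}{m,m,k-m}$ on average over $m$), and as a byproduct you obtain the stronger per-$m$ formula $D_m(z^*)=\tfrac{2m}{n+m}\binom{n+k}{m,m,k-m}$. The trade-off is that your proof is not self-contained: it imports the equality computation from the proof of \cref{lem:disagreement}, whereas the paper's argument stands alone.
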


\begin{proof}
Let $E$ denote the set of ordered pairs $(I,J)$ of subsets of $\{1,\dots,n+k\}$ such that $|I|=k$, $|J|=k-1$ and $\min (I^c) = \min (J^c)$. Here and below, all complements are taken within $\{1,\dots,n+k\}$. We show that both sides of the desired equality count the number of elements in $E$.

We begin with the left-hand side.
Since the $i$-th term in the sum is easily seen to count the number of $(I,J) \in E$ such that $\min (I^c) = k-i$, it follows that the left-hand side equals $|E|$.

We now turn to the right-hand side, which may be rewritten as
\[ \sum_{m=1}^{k\wedge n} \binom{n+k}{k-m} \binom{n+m-1}{m-1} \binom{n}{m} .\]
Let us show that the $m$-th term in the sum counts the number of $(I,J) \in E$ such that $|I \setminus J|=m$. Indeed, $\binom{n+k}{k-m}$ is the number of ways to choose $S=I \cap J$, and given any such choice, noting that neither $I$ nor $J$ can contain the number $s := 1+\min (S^c)$, we see that $\binom{n+m-1}{m-1}$ is the number of ways to choose $J \setminus S$ (which must be disjoint from $S \cup \{s\}$) and $\binom nm$ is then the number of ways to choose $I \setminus S$ (which must be disjoint from $J \cup \{s\}$).
\end{proof}

We are now ready to prove \cref{prop:optimal-bound-step}.

\begin{proof}[Proof of \cref{prop:optimal-bound-step}]
  We first address the case when $\alpha=1$. For this we use the same construction as in the proof of \cref{prop:k/n-bound}.
  Let $\cS$ consist of $\binom{n+k}{k}$ random variables $\{X_I\}_{I \subset \{1,\dots,n+k\}, |I|=n}$, where each $X_I$ is uniform on $I$.
  Then
  \[ \dTV(X_I,X_J) = \tfrac{d(I,J)}{n} \qquad \text{for any $I$ and $J$}.\]
  By \cref{lem:disagreement,lem:combi-identity}, under any coupling of the variables in $\cS$, almost surely,
  \[ \sum_{m=1}^{k \wedge n} D_m(X) 
  \ge \sum_{m=1}^{k \wedge n} F\left(\tfrac mn\right) \cdot \binom{n+k}{m,m,k-m}
  = \sum_{I,J} F\left(\dTV(X_I,X_J)\right) .\]
  Hence, by considering a coupling $\P$ for which $\P(X_I \neq X_J) \le f(\dTV(X_I,X_J))$ for all $I$ and $J$, and taking expectation, we obtain that
  \[ \sum_{I,J} f\left(\dTV(X_I,X_J)\right) \ge \sum_{I,J} \P(X_I \neq X_J) \ge \sum_{I,J} F\left(\dTV(X_I,X_J)\right) .\]
  Since the fraction of the terms where $\dTV(X_I,X_J)=\frac mn$ is $c_m$, this establishes the proposition in the case $\alpha=1$ (note that we may assume that $f(0)=0$).

  The case when $0 \le \alpha<1$ now follows by applying the case $\alpha=1$ to the disagreement bound $\frac{f(\alpha x)}{\alpha} \wedge 1$ given by \cref{prop:perturbed-disagreement-bound} with $\eps=1-\alpha$ and $\delta=0$.
\end{proof}

Finally, we prove \cref{prop:near1}, showing that $F$ is the optimal disagreement bound at some points near $1$.
The proof is based on the lower bound established for the total number of disagreements in any $(n,k)$-assignment, and the observation that, when $k\gg n$, the dominant term comes from pairs of disjoint sets (and is identical for all $(n,k)$-assignments) and the next dominant term comes from pairs with intersection of size 1.

\begin{proof}[Proof of \cref{prop:near1}]
  Fix $n \ge 2$ and let $k \gg n$.
  By \cref{prop:optimal-bound-step},
  \[ \sum_{m=1}^n c_m f(\tfrac mn) \ge \sum_{m=1}^n c_m F(\tfrac mn) .\]
  Since $f(1)=F(1)=1$, the term for $m=n$ cancels.
  Since $f\leq 1$ and $F\geq 0$, this leaves
  \[ c_{n-1} f(1-\tfrac1n) \geq c_{n-1} F(1-\tfrac1n) - \sum_{m=1}^{n-2} c_m. \]
  Finally, observe that with $n$ fixed as $k\to\infty$, we have $c_m = \Theta(k^{m-n})$, so $\sum_{m=1}^{n-2} c_m \ll c_{n-1}$ (in other words, the number of distance-$(n-1)$ pairs is much larger than the number of pairs of smaller distance).
  Dividing by $c_{n-1}$ and taking the limit $k\to\infty$, we conclude the proof.
\end{proof}

\section{Open Questions}

As noted, we are unable to show that $F$ is the optimal disagreement bound in the strong sense:

\begin{question}
  Is every disagreement bound pointwise larger-or-equal than $F$?
\end{question}

In light of \cref{T:optimal-bound}, this is equivalent to the following question.

\begin{question}
Let $f_1$ and $f_2$ be two disagreement bounds. Is the pointwise minimum $f_1 \wedge f_2$ also a disagreement bound?
\end{question}

The examples used to give some of the lower bounds above give rise to some questions in extremal combinatorics.
For example, towards bounding $f(\frac23)$, suppose for each set $I\subset\{1,\dots,n\}$ of size $|I|=3$, we assign a number $z_I\in I$.
The number of pairs $(I,J)$ with $|I\cap J|=1$ is $\binom{n}{1,2,2} = \frac{n_5}{4}$, where $n_5 = \frac{n!}{(n-5)!}$.
Among these, consider the number $Q$ of pairs $(I,J)$ such that $z_I=z_J$ is the unique element in $I \cap J$.

\begin{question}
  What is the maximal value of $Q$? 
\end{question}

Taking $z_I = \min(I)$ gives $Q = \frac{n_5}{5}$.
The known bound $f(2/3)\geq F(2/3) = 4/5$ for every disagreement bound $f$ implies that $Q \leq (\frac45 + o(1)) \frac{n_5}{4}$.
A careful modification of $z_I$ only for those sets $I$ with $\min(I) \geq n-4$ gives $Q = \frac{n_5}{5}+4$, which we believe is the maximum possible for every $n$.

Finally, we raise the question of extending our results to multi-marginal optimal transport with general cost functions:

\begin{question}
  Given a cost function $\phi$, for which $f$ does it hold that, for any finite collection of random variables taking values in $\R^d$, there exists a coupling $\mu$ of the variables such that $\E_\mu \phi(X,Y) \le f(d_\phi(X,Y))$ for every two variables $X$ and $Y$ in the collection?
\end{question}

Some results in this direction can be found in the subsequent paper \cite{TA2}.

\subsection*{Acknowledgements.}

OA is supported in part by NSERC.
We are grateful to Russ Lyons for helpful comments on earlier versions of this manuscript, to Noga Alon for the idea for \cref{prop:near1},
to Alessio Figalli and Young-Heon Kim for pointing out the bound \eqref{eq:MMOT} in the context of multi-marginal optimal transport,
and to Oded Regev and the anonymous reviewers for bringing to our attention some of the existing literature and other comments.

\bibliography{coupling}

\end{document}